\documentclass[11pt]{amsart}
\usepackage{amstext}
\usepackage{amsthm,euscript}
\usepackage{amscd}
\usepackage{amssymb}
\usepackage{amsmath}
\usepackage{mathtools}
\usepackage[all]{xy}
\usepackage{paralist}
\usepackage{enumitem}
\usepackage{moreenum}
\usepackage{scalerel,stackengine}
\usepackage[colorlinks=true, pdfstartview=FitV, linkcolor=blue,
citecolor=blue,urlcolor=blue,breaklinks=true]{hyperref}
\usepackage{subfiles} 
\swapnumbers 

\newcommand{\arxiv}[1]{\href{http://arxiv.org/abs/#1}{\tt arXiv:\nolinkurl{#1}}}
\newtheorem{thm}[subsection]{Theorem}

\newtheorem{prop}[subsection]{Proposition}

\newtheorem{lem}[subsection]{Lemma}

\newtheorem{cor}[subsection]{Corollary}

\theoremstyle{definition}

\numberwithin{equation}{subsection}

\newcommand\sm{\smallskip}
\newcommand\ms{\medskip}


\def\limind{\mathop{\oalign{lim\cr\hidewidth$\longrightarrow$\hidewidth \cr}}}
\def\limproj{\mathop{\oalign{lim\cr\hidewidth$\longleftarrow$\hidewidth\cr}}}

\newcommand{\simlgr}{\buildrel \sim \over \longrightarrow}


\newcommand{\fppf}{_\mathrm{fppf}}

\newcommand{\longto}{\longrightarrow}
\newcommand{\we}{\wedge}

\newcommand{\ti}{^\times}
\newcommand{\lan}{\langle}\newcommand{\ran}{\rangle}
\def\co{\colon}
\def\ot{\otimes} 

\def\ideal{\triangleleft}
\newcommand{\me}{^{-1}}
\def\dar[#1]{\ar@<2pt>[#1]\ar@<-2pt>[#1]}



\stackMath
\newcommand\reallywidehat[1]{%
\savestack{\tmpbox}{\stretchto{%
  \scaleto{%
    \scalerel*[\widthof{\ensuremath{#1}}]{\kern.1pt\mathchar"0362\kern.1pt}%
    {\rule{0ex}{\textheight}}
  }{\textheight}%
}{2.4ex}}%
\stackon[-6.9pt]{#1}{\tmpbox}%
}

\newcommand{\rmA}{\mathrm{A}}


\newcommand{\rmB}{\mathrm{B}}


\newcommand{\can}{\operatorname{can}}
 %

\newcommand{\rmD}{\mathrm{D}}



\newcommand{\Dyn}{\mathrm{Dyn}} 


\newcommand{\Hom}{\operatorname{Hom}}

\newcommand{\hyp}{\mathrm{hyp}}

\newcommand{\rmi}{\operatorname{i}}
\newcommand{\Id}{\operatorname{Id}}


\newcommand{\inc}{{\operatorname{inc}}}


\newcommand{\Jac}{{\operatorname{Jac}}}

\newcommand{\Ker}{\operatorname{Ker}}

\newcommand{\Mor}{\operatorname{Mor}}

\newcommand{\rmN}{\operatorname{N}}



\newcommand{\rank}{\operatorname{rank}}

\newcommand{\rad}{\operatorname{rad}}

\newcommand{\res}{\operatorname{res}}

\newcommand{\Spec}{\operatorname{Spec}}
\newcommand{\Specmax}{\operatorname{Specmax}}


\newcommand{\Wq}{\operatorname{W}_q}

\newcommand{\hWq}{\widehat{\operatorname{W}}_q}









\newcommand\sfj{{\sf j}} 

\newcommand\sfq{{\sf q}}
\newcommand\sfR{{\sf R}}\newcommand\sfr{{\sf r}}
 
 \newcommand\sft{{\sf t}} \newcommand\sftr{\sft\sfr}




\newcommand{\Of}{\mathrm{Of}} 










\newcommand{\uG}{\mathbf G}




\newcommand{\uP}{\mathbf{P}}


\newcommand{\uQ}{\mathbf{Q}}

\newcommand{\uSO}{\mathbf {SO}}

\newcommand{\uT}{\mathbf T}

\newcommand{\uX}{\mathbf X}



\newcommand{\m}{\mathfrak m}

\newcommand{\p}{\mathfrak p}



\newcommand{\Ralg}{R\mathchar45\mathbf{alg}}
\newcommand{\Salg}{S\mathchar45\mathbf{alg}}

\newcommand{\GG}{{\mathbb G}}
\newcommand{\HH}{{\mathbb H}}
\newcommand{\MM}{{\mathbb M}}
\newcommand{\NN}{{\mathbb N}}

\newcommand{\PP}{{\mathbb P}}

\newcommand{\ZZ}{{\mathbb Z}}

\newcommand\al{\alpha}
\newcommand\be{\beta}

\newcommand\ka{\kappa}
 
 \newcommand\vphi{\varphi}

 \newcommand\vth{\vartheta}




\begin{document}
\title[Odd Degree Extension Theorem]{Springer's Odd Degree Extension Theorem for Quadratic Forms over Semilocal Rings}
\author[P. Gille]{Philippe Gille}
\address{UMR 5208 du CNRS -
Institut Camille Jordan - Universit\'e Claude Bernard Lyon 1, 43 boulevard du
11 novembre 1918, 69622 Villeurbanne cedex - France }
\thanks{P. Gille was supported by the ANR project \emph{Geolie} ANR-15-CE40-0012}
\email{gille@math.univ-lyon1.fr}

\author[E. Neher]{Erhard Neher}
\address{ Department of Mathematics and Statistics, University of Ottawa,
Ottawa, Ontario, Canada, K1N 6N5}
\thanks{E. Neher acknowledges partial support from NSERC through a Discovery grant}
\thanks{This work was supported by the Labex Milyon (ANR-10-LABX-0070) of Universit\'e de Lyon, within the program ``Investissements d'Avenir'' (ANR-11-IDEX- 0007) operated by the French National Research Agency (ANR)}
\email{neher@uottawa.ca}

\date{\today}
\maketitle
\smallskip
\[ \text{\em In memory of T.A. Springer} \]
\bigskip

\noindent{\bf Abstract:} A fundamental result of Springer says that a quadratic form over a field of characteristic $\ne 2$ is isotropic if it is so after an odd degree field extension. In this paper we generalize Springer's theorem as follows. Let $R$ be  an arbitrary semilocal ring, let $S$ be a finite $R$--algebra of constant odd degree, which is \'etale or generated by one element, and let $q$ be  a nonsingular $R$--quadratic form whose base ring extension $q_S$ is isotropic. We show that then already $q$ is isotropic.
\medskip

\noindent{\bf Keywords:} Quadratic forms, semilocal rings, Springer Odd Degree Extension Theorem.

\medskip

\noindent {\em MSC 2000:} 11E08, 11E81.

\bigskip

\section*{Introduction}

A celebrated result of Springer \cite{springer-qf} says that a quadratic form over a field $F$ of characteristic $\ne 2$ that becomes isotropic over an odd degree field extension is already isotropic over $F$. The result was conjectured by Witt and was also proven, but not published by E.~Artin. An account of Springer's result is given in \cite[VII, Thm.~2.7]{Lam-qf} or \cite[II, Thm.~5.3]{Sc}. It is proven for arbitrary fields in \cite[18.5]{EKM}.

Springer's Theorem has many important consequences, for example for Witt groups. It is therefore not surprising that it has been generalized by replacing odd degree extensions by more general field extensions, see for example, \cite[Prop.~5.3]{Ho1}, \cite[Cor.~4.2]{Ho2} or \cite[Lem.~2.8]{Lag}, or by replacing the base field $F$ by more general rings. The main result of this paper goes in the latter direction: \sm

\textbf{Odd Degree Extension Theorem.} (Theorem~\ref{thm_springer}) {\em Let $R$ be a semilocal ring, $M$ a finite projective $R$--module, $q\co M \to R$  a nonsingular quadratic form, and $S$ a finite  $R$--algebra of constant odd degree, which is \'etale or one-generated.  If the base ring extension $q_S$ is isotropic, then already $q$ is isotropic.} \sm

Since the terminology regarding quadratic forms over rings is not standard, see the comparison in  \ref{qfba-sing}\eqref{qfba-term}, let us explain what we mean with a nonsingular quadratic form. Recall that the radical of $(M,q)$ with polar form $b_q$ is $\rad (M,q) = \{ m\in M : q(m) = 0 = b_q(m,M)\}$. We call $q$ {\em nonsingular\/}, if $\rad((M,q)_F) = \{0\}$ for all $R$--fields $F$. An example of such a form is a {\em regular} quadratic form, defined by the condition that its polar form induces an isomorphism between $M$ and its dual space. Nonsingular and regular quadratic forms are the same in case $2\in R\ti= \{ u \in R: u R=R\}$, the units of $R$, or in case the projective $R$--module $M$ has constant even rank, but not in general. For example, for $u\in R\ti$ the quadratic form $R \to R$, $x \mapsto ux^2$ is always nonsingular, but regular only if $2\in R\ti$. We call a finite projective $R$--algebra $S$ of constant degree $d$  {\em one-generated\/} if it is generated by one element, equivalently, $S\cong R[X]/f$ for a monic polynomial of degree $d$.

Special cases of the Odd Degree Extension Theorem have been proven before; all of them assumed $2\in R\ti$, so that nonsingular = regular. Specifically, it was proven by Panin and Rehmann \cite{panin-rehmann} under the assumption that $R$ is a noetherian local integral domain, which has an infinite residue field, and that $S$ is \'etale (necessarily one-generated in this case). It was extended in \cite{panin-pimenov} by Panin-Pimenov to $R$ a semi-local noetherian integral domain whose residue fields are all infinite. It was stated in \cite{Scully} for $R$ semilocal, $2\in R\ti$ and $S$ finite \'etale of constant odd degree. However,  there is a serious gap in the proof of the crucial Proposition 3.1 of \cite{Scully}. As a result, the Odd Degree Extension Theorem, the main theorem of \cite{Scully},  is not proven in the generality stated.  \sm

For proving the Odd Degree Extension Theorem one can easily reduce to $M$ of constant rank $r\ge 2$. The case $r=2$ (Lemma~\ref{ratwo}) holds for any finite $R$--algebra; our proof uses a consequence of Deligne's trace homomorphism for the flat cohomology of abelian affine group schemes (Lemma~\ref{del-tr}). For $r \ge 3$ we follow an approach inspired by the paper \cite{panin-rehmann} of Panin and Rehmann and first deal with one-generated algebras. A crucial step here is
proving that isotropic elements can be lifted from $M \ot_R (S/\Jac(R)S)$ to $M\ot _R S$ (Corollary~\ref{cor_qquadric}). We obtain this as an application of Demazure's Conjugacy Theorem for reductive $R$--group schemes. We should point out, that in this part we use the classical version of the Odd Degree Extension Theorem ($R$ a field, \cite{EKM}). Once the case of one-generated algebras has been settled, the case of \'etale algebras easily follows by applying a recent result of Bayer-Fluckiger, First and Parimala \cite{BFP} on embeddings of finite \'etale $R$--algebras into one-generated $R$--algebras. \sm

{\em Structure of the paper.} Section~\ref{sec:qf} presents a review of nonsingular quadratic forms and proves some results needed for our proof of the Odd Degree Extension Theorem in the first part of section~\ref{sec:soet}. In the second part of that section we prove some consequences of the theorem well-known in case of base fields. The paper closes with two appendices, \ref{app:lifting} and \ref{app:torEN}. Their results are used in our proof of the Odd Degree Extension Theorem, but they are of interest beyond that Theorem. \sm

{\em Basic notation.}
Throughout, $R$ is a commutative associative unital ring. We do not assume that $2 \in R\ti$, unless this is explicitly so stated. We denote by $\Specmax(R)$ the subspace of the spectrum $\Spec(R)$ whose elements are the maximal ideals of $R$.
Modules, quadratic forms and algebras will be defined over $R$. We use $\Ralg$ to denote the category of commutative associative unital $R$--algebras. Objects of $\Ralg$ are sometimes also referred to as $R$--rings. If $M$ is an $R$--module and $S \in \Ralg$, we put $M_S = M \ot_R S$.

We call a submodule $U$ of an $R$--module $M$ complemented if there exists a submodule $U'$ of $M$ such that $U \oplus U' = M$. We say an $R$--module $M$ is finite projective if it is finitely generated projective (= locally free of finite rank). A finite $R$--algebra is an algebra $S\in \Ralg$ whose underlying $R$--module is finitely generated, but not necessarily projective. A finite projective $R$--algebra is an $R$--ring $S$ whose underlying $R$--module is finite projective. The norm of such an $R$--algebra $S$ is denoted $\rmN_{S/R}$. A finite projective $R$--algebra $S$ is called finite \'etale if it is separable, see for example \cite{Ford} or \cite{K} for other equivalent definitions.
We call $S\in \Ralg$ an \'etale $R$--algebra of degree $d\in \NN$ if $S$ is a finite \'etale algebra whose underlying $R$--module is projective of constant rank $d$.

\section{Quadratic forms}\label{sec:qf}

This section starts with a review of known facts on bilinear and quadratic forms over arbitrary rings, thereby also establishing our notation and terminology, \ref{qfba}--\ref{meta}. We study quadratic forms over semilocal rings in the second part of this section.

\subsection{Quadratic and symmetric bilinear forms (terminology, basic facts)}\label{qfba}
\begin{inparaenum}[(a)]
\item \label{qfba-aa} A {\em (symmetric) bilinear module\/} is a pair $(M,b)$ consisting of a finite projective $R$--module and a symmetric $R$--bilinear map $b\co M \times M \to R$. Except when otherwise stipulated,  we will only consider symmetric bilinear forms. It is therefore convenient to simply speak of bilinear forms and bilinear modules. We use the symbol $(M,b) \cong (M',b')$ to denote isometric bilinear modules. When $M$ is clear from the context or unimportant, we will sometimes write $b$ for $(M,b)$.
    Given a bilinear module $(M,b)$, its {\em adjoint\/} is the $R$--linear map $b^* \co M \to M^*= \Hom_R(M,R)$, $m \mapsto b(m, \cdot)$. We call $(M,b)$ {\em regular\/} if $b^*$ is an isomorphism.

   \sm

   \item \label{qfba-aaq} A {\em quadratic module\/} is  a pair $(M,q)$ where $M$ is a finite projective $R$--module and $q\co M \to R$ is an $R$--quadratic form. We use $b_q$ to denote the polar form of $q$.
       We call $q$ or $(M,q)$ {\em regular\/} if $b_q$ is regular. As for bilinear modules, $(M,q) \cong (M',q')$ indicates isometric quadratic modules. We sometimes write $q$ instead of $(M,q)$ if $M$ is unimportant or clear from the context. \sm

\item\label{qfba-a} ({\em Base change}) Let $S\in \Ralg$, and let $(M,b)$ be an $R$--bilinear module. There exists a unique $S$--bilinear form $b_S \co M_S \times M_S \to S$ satisfying $b_S(m_1 \ot s_1, m_2 \ot s_2) \allowbreak = b(m_1, m_2) s_1s_2$ for $m_i \in M$ and $s_i \in S$. Given an $R$--quadratic module $(N,q)$, there exists a unique $S$--quadratic form $q_S \co N_S \to S$ satisfying $q_S(n \ot s) = q(n)s^2$ for $s\in S$ and $n\in N$. The polar of $q_S$ is the base change of the polar of $q$.     If $(M, b)$ (or $(N,q)$) is regular, then so is $(M_S, b_S)$ (respectively $(N_S, q_S)$). 
    \sm

\item\label{qfba-tens} ({\em Tensor products}) Let $(M_i, b_i)$, $i=1,2$, be  bilinear modules. There exists a unique symmetric bilinear form $b_1 \ot b_2$ on $M_1 \ot_R M_2$ satisfying $(b_1 \ot b_2)(m_1 \ot m_2, m_1'\ot m_2') = b_1(m_1, m'_1)\, b_2(m_2, m_2')$ for $m_i, m_i' \in M_i$.
Given an $R$--bilinear module $(M,b)$ and an $R$--quadratic form $(N,q)$, there exists a unique $R$--quadratic form $b\ot q \co M \ot_R N\to R$ satisfying
\begin{equation}\label{tenssq0}
\begin{split} (b \ot q)\, (m \ot n) &= b(m,m) \, q(n), \; \text{and}\\
  b_{b\ot q}(m \ot n, m' \ot n') &= b(m, m')\, b_q(n, n')
\end{split}
\end{equation}
for all $m, m'\in M$ and $n, n'\in N$, where $b_q$ is the polar form of $q$. It is called the {\em tensor product of $(M,b)$ and $(N,q)$}, \cite[Thm.~1]{Sah}. The polar form of $b \ot q$ is the tensor product of the symmetric bilinear forms $b \ot b_q$. If $(M,b)$ and $(N,q)$ are regular, then so is $(M,b) \ot (N,q)$.
The tensor product is compatible with base change: for $S\in \Ralg$ we have
\begin{equation*}
(M,b)_S \ot_S (N,q)_S \simlgr \big((M,b) \ot_R (N,q)\big)_S
\end{equation*}%
with respect to $m\ot s_1 \ot n \ot s_2 \mapsto m\ot m \ot s_1 s_2$.
\sm

\item \label{qfba-or} Given a bilinear module $(M,b)$, the {\em orthogonal module\/} of a  submodule $U\subset M$ is $U^\perp = \{m \in M : b(m, U) = 0 \}$. We call $U$ {\em totally isotropic\/} if $U \subset U^\perp$. A submodule $U$ of a quadratic module $(M,q)$ is {\em totally isotropic\/} if $q(U) = 0$, in which case $U$ is also a totally isotropic submodule of $(M,b_q)$.
      If $(M,b)$ is a bilinear module and $U\subset M$ is a submodule such that $(U, b|_U)$ is regular, then $M = U \oplus U^\perp$.
\end{inparaenum}

\subsection{Nonsingular quadratic forms.}\label{qfba-sing} Recall that the {\em radical\/} of a quadratic module $(M,q)$ is  $\rad (M,q) = \{ m\in M : q(m) = 0 = b_q(m,M)\}$.  It is a submodule of $M$ and satisfies $(\rad(M,q))_S \subset \rad( (M,q)_S)$, $S\in \Ralg$. We call $(M,q)$ or simply $q$ {\em nonsingular\/} if $\rad((M,q)_F) = \{0\}$ for all fields $F\in \Ralg$. A {\em quadratic space\/} is a quadratic module $(M,q)$ with a nonsingular $q$. We will use the following properties of nonsingular forms (for details see the references in  \eqref{qfba-term}).

\begin{inparaenum}[(a)]
\item\label{qfba-term}  ({\em Comparison of terminology}) Our terminology of a regular bilinear form or regular quadratic form follows \cite{K} and \cite{Sc} (except that in these references ``regular'' and ``nonsingular'' are used interchangeably), but a  regular bilinear form as defined here is called ``non singular'' in \cite{Ba} and  ``nondegenerate'' in \cite{EKM}.
    A nonsingular quadratic form is called ``non-degenerate'' in \cite{Co1}, ``nondegenerate'' in \cite{EKM}, ``semiregular'' in \cite{K} in case of constant odd rank, cf.\ \eqref{qfba-sing-v}, and ``separable'' in \cite{P-Fields}.
    \sm

\item A regular quadratic form is nonsingular, since $\rad((M,q)_S) = 0$ for a regular form and any $S\in \Ralg$ by \ref{qfba}\eqref{qfba-a}. \sm

  \item \label{qfba-sing-ii} If $2\in R\ti$, a nonsingular form is regular. Indeed, if $2\in R\ti$, then $\rad(q_S) = \{m\in M_S : b_{q_S}(m,M_S)=0\}$, $S\in\Ralg$, and so the adjoint of $b_q$ is an isomorphism by Nakayama. 
      \sm

  \item \label{qfba-sing-iii}  If $q$ is nonsingular, then so is $q_S$ for any $S\in \Ralg$.  %
     \sm

   \item \label{qfba-sing-v} Let $(M,q)$ be a quadratic module with $M$ of constant even rank. Then
      $q$ is  nonsingular if and only if $q$ is regular, cf.\ \eqref{qfba-one} below. \sm

 \item \label{qfba-sing-vi} A quadratic $R$--space $(M,q)$ with $M$ faithfully projective is {\em primitive\/} in the sense that $q(M)$ generates $R$ as ideal. If $R$ is semilocal, then $q(m) \in R\ti$ for some $m\in M$ (which is unimodular in the sense of \ref{isotrop}).
\sm

  \item \label{qfba-sing-vii} Let $(M,q) = (M_1, q_1) \perp (M_2, q_2)$ be a direct sum of quadratic modules. If $q$ is nonsingular, then so are $q_1$ and $q_2$. Conversely, if $q_1$ is regular, then $q$ is nonsingular if and only if $q_2$ is nonsingular.
      If $q_1$ and $q_2$ are nonsingular, then $q$ need not be nonsingular, see the example in \ref{carhyp-cor}.
\sm

\item \label{qfba-one} ({\em $1$--dimensional forms}) Let $u\in R\ti$. We define $\lan u \ran_b \co R \times R \to R$, $(r_1, r_2) \mapsto u r_1 r_2$ and $\lan u \ran_q \co R \to R$, $r \mapsto ur^2$. Then $\lan u \ran_b $ is a regular bilinear form. The polar of the quadratic form $\lan u \ran_q$ is $2 \lan u \ran$, whence $\lan u \ran_q$ is regular if and only if $2\in R\ti$, but $\lan u \ran_q$ is always nonsingular.
    We abbreviate $\lan u \ran = \lan u \ran_b$  if the meaning of $\lan u \ran$ is clear from the context.

   If $(M,b)$ and $(N,q)$ are bilinear and quadratic modules respectively, then
\begin{equation*} \label{qfba-tens-2}
 \lan 1\ran_b \ot b \cong  b, \quad \lan 1 \ran_b \ot q \cong q
 \end{equation*}
under the standard isomorphism $R \ot_R  M \simlgr M$. However,
$ b \ot \lan 1\ran_q = q_b$, where $q_b \co M \to R$, $m \mapsto b(m,m)$, is the quadratic form associated with $b$. Its polar is $b_{q_b} = 2b$.
\sm

\item\label{qfba-red} ({\em Reduction to constant rank}) Let $R=R_0 \times \cdots \times R_n$ be a direct product of rings. For $0 \le i \le n$ we view $e_i = 1_{R_i}$ as an idempotent of $R$. Thus $(e_0, \ldots, e_n)$ is a system of mutually orthogonal idempotents  with $R_i=Re_i$. Any quadratic $R$--module $(M,q)$ uniquely decomposes into the orthogonal sum of quadratic $R$--modules
    \begin{equation}\label{qfba-red1}
     (M,q) = (M_0, q_0) \perp \cdots \perp (M_n,q_n),
    \end{equation}
     where $M_i = M e_i$ and $q_i$ is restriction of $q$ to $M_i$. By a slight abuse of notation we will also consider $(M_i, q_i)$ as a quadratic $R_i$--module and identify $(M,q)$ with the direct product $(M_1, q_1) \times \cdots \times (M_n, q_n)$ of quadratic $R_i$--modules. Conversely, given quadratic $R_i$--modules  $(M_i, q_i)$, $0 \le i \le n$, we obtain a quadratic $R$--module $(M,q)$ for which \eqref{qfba-red1} holds. The quadratic $R$--module $(M,q)$ is regular (resp.\ nonsingular) if and only if all quadratic $R_i$--modules $(M_i, q_i)$ are regular (resp.\ nonsingular).

    For arbitrary $R$, a finite projective $R$--module $M$ gives rise to a decomposition $R=R_0 \times \cdots \times R_n$ and hence to $M = M_0 \times \cdots \times M_n$ such that $M_i$ is a projective $R_i$--module of constant rank $i$. The discussion above then describes the reduction of quadratic modules to quadratic modules of constant rank. \sm

\item\label{qfba-wi} ({\em Witt cancellation})
Let $R$ be a semilocal ring, let $q_1$, $q_2$, $q_1'$ and $q_2'$ be quadratic forms where $q_1$ is regular and $q_2$ is nonsingular and of positive rank.  If $q_1 \cong q_1'$ and $q_1 \perp q_2 \cong q_1' \perp q_2'$, then $q_2 \cong q_2'$. Indeed, since the nonsingular form $q_2$ is primitive by \eqref{qfba-sing-vi}, this follows from \cite[K\"urzungssatz]{Knebusch}. Witt cancellation with all forms $q_i$ and $q_i'$ being regular, is proven in \cite[III, (4.3)]{Ba}.
\end{inparaenum}

\subsection{Metabolic and hyperbolic spaces.} \label{meta}
Let $(U,b)$ be a bilinear module.  The {\em metabolic space\/} associated with $(U,b)$ is the bilinear module $\MM(U,b) = (U \oplus U^*, b_{\MM(U,b)})$ whose bilinear form $b_{\MM(U,b)}$  is defined by
\begin{equation*} \label{meta-1}
 b_{\MM(U,b)} (u + \vphi, v + \psi) = b(u,v) + \vphi(v) + \psi(u).
 \end{equation*}
It is a regular bilinear form, justifying the terminology ``space''.
By definition, the {\em hyperbolic bilinear space\/} is $\HH_b(U) = \MM(U, 0)$ where $0$ is the null form.

Given a finite  projective $R$--module $U$, the associated {\em hyperbolic space $\HH(U)$} is the quadratic module $(U^* \oplus U, \hyp)$ with $\hyp(\vphi \oplus u) = \vphi(u)$.  The polar form of the hyperbolic quadratic form $\hyp$ is the bilinear form $b_{\HH_b(U,0)}$, in particular, $\hyp$ is nonsingular. We call $\HH(R)$ the {\em hyperbolic plane\/}.

We say a bilinear module $(M, b)$ is {\em metabolic\/} if there exists a bilinear module $(U, b_U)$ such that $(M, b)\cong \MM(U,b_U)$. The same terminology will be applied to hyperbolic quadratic spaces. The following facts are for example proven in \cite[I, \S3 and \S4]{Ba}.
\sm

\begin{inparaenum}[(a)]
\item \label{meta-c} %
 Let $(M, b)$ be a regular bilinear module and let $U \subset M$ be a totally isotropic complemented submodule. Then there exists a submodule $V\subset M$ such that $U \cap V = 0$,  $(U\oplus V, b|_{U \oplus V})$ is metabolic and hence $M = (U \oplus V) \perp (U \oplus V)^\perp$. If $b(m_1, m_2) = b_0(m_1, m_2) + b_0(m_2, m_1)$ for some in general non-symmetric bilinear form $b_0 \co M \times M \to R$, one can choose $V$ such that $(U\oplus V, b|_{U \oplus V})$ is a hyperbolic bilinear space.
\sm

\item\label{meta-b} %
({\em Characterization of metabolic spaces}) A regular bilinear module $(M, b)$ is metabolic if and only if one of the following conditions holds:
\end{inparaenum} \begin{enumerate}[label={\rm (\roman*)}]

  \item\label{meta-bi}  $M$ contains a totally isotropic complemented submodule $V$ with $V = V^\perp$, a so-called {\em Lagrangian},

 \item \label{meta-bii} $M$ contains a totally isotropic and complemented submodule $U$ satisfying $\rank_\p M = 2 \rank_\p U$ for all $\p \in \Spec(R)$.
\end{enumerate}
In this case $(M,b) \cong \MM(U)$ for $U \cong V^*$. In particular, $\MM(U)$ is free whenever $V$ is free, and any decomposition $V=V_1 \oplus \cdots \oplus V_n$ gives rise to a decomposition  $\MM(U) = \MM(U_1) \perp \cdots \perp \MM(U_n)$ with $U_i \cong V_i^*$. \sm

To see that $M$ is metabolic in case \ref{meta-bii}, apply \eqref{meta-c} to get $M = (U \oplus V)\perp (U \oplus V)^\perp$ where $U \oplus V$ is metabolic and $\rank_\p = 2 \rank_\p U = \rank_\p M$, whence $(U \oplus V)^\perp = 0$. The last claim follows from $V^* = V_1^* \oplus \cdots \oplus V_n^*$.
\ms

In the remainder of this section we consider quadratic spaces $(M,q)$ over $R$ in two settings: (a) $q$ is regular, and (b) $R$ is semilocal. The results in case (a) are proven in \cite[\S3 and \S4]{Ba}. Since our techniques in case (b) easily also lead to proofs in the setting of (a), we prove (a) and (b) at the same time.  We first recall a folklore result about lifting of subspaces in the semilocal setting.
It is straightforward to prove or can be obtained by specializing the Reduction Theorem \cite[II, (4.6.1)]{K}.%

\begin{lem}\label{lift-mod} Let $R$ be a semilocal ring, let $M$ be a finite  projective $R$--module and let $U\subset M$ be a complemented submodule. For a maximal ideal $\m\ideal R$ we put $\ka(\m) = R/\m$, $M(\m) = M \ot_R \ka(\m) = M / \m M$, and analogously for $U(\m)$. We further assume that $r\in \NN_+$ and that for every $\m \in \Specmax(R)$ there exists an $r$--dimensional subspace $W[\m] \subset M(\m)$ with $U(\m) \cap W[\m] = \{0\}$.

Then there exists a free submodule $W\subset M$ of rank $r$ which satisfies $W(\m) = W[\m]$, $U \cap W = \{0\}$ and which has the property that $U \oplus W$ is complemented in $M$. \end{lem}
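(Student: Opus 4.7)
The plan is to construct $W$ by lifting a basis chosen modulo the Jacobson radical $\Jac(R)$, and then to verify all required properties by translating them to a splitting question in the finite projective quotient $P := M/U$ and applying a Nakayama-type patching argument. Concretely, set $\bar R := R/\Jac(R) \cong \prod_{\m} \ka(\m)$ (Chinese Remainder Theorem), and write $\bar M := M/\Jac(R)M \cong \prod_{\m} M(\m)$ and $\bar U \cong \prod_{\m} U(\m)$ analogously. The product $\bar W := \prod_{\m} W[\m] \subset \bar M$ is then a free $\bar R$-module of rank $r$. Pick an $\bar R$-basis $\bar w_1, \dots, \bar w_r$ of $\bar W$, lift it to elements $w_1, \dots, w_r \in M$ under the surjection $M \twoheadrightarrow \bar M$, and put $W := Rw_1 + \cdots + Rw_r$.

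The key step is to show that $\psi \co R^r \to P$, $e_i \mapsto w_i + U$, is a split injection. Fiberwise, its reduction $\bar\psi_\m \co \ka(\m)^r \to M(\m)/U(\m)$ is injective at each maximal ideal $\m$, because $\bar w_1[\m], \ldots, \bar w_r[\m]$ is a basis of $W[\m]$ and $W[\m] \cap U(\m) = \{0\}$. Hence the base change $\bar\psi \co \bar R^r \to \bar P$ is injective, and since $\bar R$ is semisimple, $\bar\psi$ admits a retraction $\bar\tau \co \bar P \to \bar R^r$. Because $P$ is finite projective, the base-change isomorphism $\Hom_R(P, R^r) \otimes_R \bar R \cong \Hom_{\bar R}(\bar P, \bar R^r)$ applies, so $\bar\tau$ lifts to some $\tau \co P \to R^r$. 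The composite $\tau \circ \psi \in \End_R(R^r)$ reduces to the identity modulo $\Jac(R)$, hence lies in $\Id_r + \Mat_r(\Jac(R)) \subset \GL_r(R)$; setting $\tau' := (\tau \circ \psi)^{-1} \circ \tau$ yields a genuine retraction of $\psi$, so $\psi$ is split injective and $P = \psi(R^r) \oplus \Ker(\tau')$.

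From the injectivity of $\psi$ we read off $W \cap U = \{0\}$ and the identification $W \cong R^r$, so $W$ is free of rank $r$; a dimension count at each $\m$ (the induced map $W(\m) \to M(\m)$ has image $W[\m]$ of dimension $r$ and is injective) then yields $W(\m) = W[\m]$. For the complementation of $U \oplus W$ in $M$, fix a section $s \co P \to M$ of the projection $\pi \co M \to P$ (available because $U$ is complemented), giving $M = U \oplus s(\psi(R^r)) \oplus s(\Ker(\tau'))$. Since both $W$ and $s(\psi(R^r))$ are complements of $U$ inside $\pi^{-1}(\psi(R^r)) = U + W$, we have $U \oplus W = U \oplus s(\psi(R^r))$, and therefore $s(\Ker(\tau'))$ is the required complement of $U \oplus W$ in $M$. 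The delicate point is the middle paragraph: the passage from fiberwise to global splitting relies on the finite projectivity of $P$ (to lift $\bar\tau$ via base change of $\Hom$) and on the Nakayama step that upgrades $\tau \circ \psi$ to an element of $\GL_r(R)$.
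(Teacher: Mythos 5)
Your proof is correct. Note that the paper does not actually prove Lemma~\ref{lift-mod}: it only remarks that the statement ``is straightforward to prove or can be obtained by specializing the Reduction Theorem [Knu, II, (4.6.1)]''. So there is no argument in the text to compare yours with line by line; what you have written is a complete, self-contained version of the ``straightforward'' proof the authors had in mind, rather than a specialization of Knus's Reduction Theorem. Your route — reduce modulo $\Jac(R)$, use the Chinese Remainder Theorem to identify $\bar M$ with $\prod_\m M(\m)$, lift a basis of $\prod_\m W[\m]$, and then upgrade the fiberwise injection $\ka(\m)^r \to M(\m)/U(\m)$ to a split injection $R^r \to M/U$ by lifting a retraction over the semisimple quotient and correcting by an element of $\Id_r + \Mat_r(\Jac(R)) \subset \GL_r(R)$ — is exactly the kind of Nakayama-style patching the semilocal hypothesis is there for, and all the auxiliary facts you invoke are valid: $P = M/U$ is finite projective because $U$ is complemented in the finite projective $M$ (worth stating explicitly), so the base-change isomorphism for $\Hom$ and the surjectivity of $\Hom_R(P,R^r) \to \Hom_{\bar R}(\bar P,\bar R^r)$ do apply; the identifications $W \cap U = \{0\}$, $W \cong R^r$, $W(\m) = W[\m]$, and the complement $s(\Ker(\tau'))$ of $U \oplus W$ all check out. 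Compared with the paper's pointer to Knus, your argument has the advantage of being elementary and self-contained, at the cost of about a page of bookkeeping that the authors chose to suppress.
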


\begin{prop}\label{lift-qf} Let $(M,q)$ be a quadratic space over $R$, let $U \subset M$ be a complemented totally isotropic submodule, and assume one of the following. \begin{enumerate}[label={\rm (\alph*)}]

\item\label{lift-qfa} $q$ is regular, or

\item \label{lift-qfr} $R$ is semilocal.

\end{enumerate}
Then there exists a totally isotropic submodule $V \subset M$ such that $U \cap V = \{0\}$, $(U \oplus V, q_{U \oplus V}) \cong \HH(U)$ and hence $M = (U\oplus V) \oplus (U \oplus V)^\perp$.
\end{prop}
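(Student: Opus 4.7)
The plan is to split according to the hypothesis. By \ref{qfba-sing}\eqref{qfba-red} I may assume $U$ has constant rank $r$. In setting (a), $q$ is regular so $b_q$ is regular; the sequence $0\to U^\perp\to M\to U^*\to 0$ induced by restricting $b_q^*$ splits by projectivity of $U^*$, giving a complement $W$ that pairs perfectly with $U$, and the remainder of the argument for this case is the classical one of \cite[I, \S3]{Ba}. I concentrate on setting (b), where $b_q$ can fail to be regular globally and the above splitting is unavailable.

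For each maximal ideal $\m \in \Specmax(R)$, write $\ka(\m) = R/\m$. The reduction $(M(\m), q(\m))$ is a nonsingular quadratic space over $\ka(\m)$, and $U(\m)$ is a totally isotropic subspace of dimension $r$. Classical Witt theory over fields produces a totally isotropic subspace $V[\m] \subset M(\m)$ with $V[\m] \cap U(\m) = 0$ and $U(\m) \oplus V[\m] \cong \HH(U(\m))$ as quadratic modules. Applying Lemma~\ref{lift-mod} yields a free submodule $W \subset M$ of rank $r$ satisfying $W(\m) = V[\m]$ for every $\m$, $U \cap W = 0$, and $U \oplus W$ complemented in $M$. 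The induced $R$-linear map $W \to U^*$, $w \mapsto b_q(-,w)|_U$, is an isomorphism modulo each maximal ideal (by the hyperbolic structure over $\ka(\m)$), and is therefore itself an isomorphism by Nakayama over the semilocal ring $R$. So $b_q$ restricts to a perfect pairing $U \times W \to R$.

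At this stage $W$ need not be totally isotropic, and I correct this by adding elements of $U$. Pick a basis $e_1, \dots, e_r$ of the free module $W$ and let $u_1, \dots, u_r \in U$ be the basis dual to $(e_j)$ under $b_q$, so that $b_q(u_i, e_j) = \delta_{ij}$. Define an $R$-linear map $\alpha \co W \to U$ on the basis by
\[ \alpha(e_i) \;=\; -q(e_i)\, u_i \;-\; \sum_{j>i} b_q(e_i, e_j)\, u_j, \]
and put $V = \{w + \alpha(w) : w \in W\}$. Using $q|_U = 0$ (which also implies $b_q|_{U \times U} = 0$), a direct expansion shows $q(w + \alpha(w)) = q(w) + b_q(w, \alpha(w)) = 0$ for every $w \in W$, so $V$ is totally isotropic. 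Since $\alpha(w) \in U$, one has $U + V = U + W$ and $U \cap V = 0$ in $M$, so $U \oplus V$ is complemented in $M$; and the pairing $b_q|_{U \times V}$ coincides with the perfect pairing $b_q|_{U \times W}$ under the isomorphism $w + \alpha(w) \leftrightarrow w$. Hence $(U \oplus V, q|_{U \oplus V}) \cong \HH(U)$. The regularity of $\HH(U)$ together with \ref{qfba}\eqref{qfba-or} then delivers the orthogonal decomposition $M = (U \oplus V) \oplus (U \oplus V)^\perp$.

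The main obstacle is the correction step: producing $V$ from $W$ amounts to writing $-q|_W$ as the diagonal of some (not necessarily symmetric) bilinear form on $W$, which can fail to be achievable in a basis-free way. Semilocality bypasses this issue by guaranteeing, via Lemma~\ref{lift-mod}, that $W$ is free, so the basis-dependent $\alpha$ above is globally defined.
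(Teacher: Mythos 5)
Your proof is correct and follows essentially the same route as the paper: reduce to constant rank, invoke the field case, lift a rank-$r$ complement $W$ pairing perfectly with $U$ via Lemma~\ref{lift-mod} and Nakayama, then correct $W$ inside $U\oplus W$ to a totally isotropic $V$ by adding elements of $U$. Your explicit triangular map $\alpha$ is just a basis-level instance of the paper's device of choosing a (not necessarily symmetric) bilinear form $b_0$ with $b_0(m,m)=q(m)$ (\cite[I, (1.7)]{Ba}), which is also how the paper treats case \ref{lift-qfa} uniformly rather than deferring it to Baeza as you do.
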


\begin{proof} \begin{inparaenum}[(I)]
\item \label{lift-qf-II} ({\em Intermediate step}) Suppose there exists a submodule $W \subset M$ satisfying
\begin{equation}\label{lift-qf-1} \begin{split}
 & U \cap W = \{0\}, \text{ and for which the canonical map  } \\
  &\be \co U \simlgr W^*, \quad u \mapsto (w \mapsto b_q(u,w))
\end{split} \end{equation}
is an isomorphism of $R$--modules. By \cite[I, (1.7)]{Ba}, we can choose a not necessarily symmetric bilinear form $b_0$ satisfying $b_0(m,m) = q(m)$ for all $m\in M$. By \eqref{lift-qf-1}, for every $w\in W$ there exists a unique $u_w \in U$ such that $b_q(u_w, w') = b_0(w, w')$ holds for all $w'\in W$. Because of uniqueness of the $u_w$, the map $W\to U$, $w \mapsto u_w$, is $R$--linear. Then
$V = \{ w - u_w : w \in W\}\cong W$ is a totally isotropic submodule: since $q(u_w)=0$ we have
\[ q(w - u_w) = q(w) - b_q(w, u_w) = b_0(w,w) - b_q(u_w, w) = 0.
\]
 Moreover, $U \cap V = 0$ and the canonical map $U \simlgr V^*$, $u \mapsto (v \mapsto b_q(u,v))$ is an isomorphism. Hence $(U\oplus V, q|_{U \oplus V}) \cong \HH(U)$. Since $\HH(U)$ is a regular quadratic module, \ref{qfba}\eqref{qfba-or} applies and yields 
  $M = (U \oplus V) \oplus (U \oplus V)^\perp$.

In the remainder of the proof we will establish the existence of a submodule $W$ satisfying \eqref{lift-qf-1}. We point out that \eqref{lift-qf-II} applies to the two cases \ref{lift-qfa} and \ref{lift-qfr}. \sm

\item \label{lift-qf-III} ({\em Case {\rm \ref{lift-qfa}} in general and case {\rm \ref{lift-qfr}} with $R$ a field}) Let $U^\perp = \{m \in M: b_q(m, U) = 0 \}$. We have a well-defined pairing
    \[
    \al \co U \times M/(U^\perp) \to R, \quad (u, \bar m) \mapsto b_q(u, m)
    \]
    which is regular: if $\al(u, \bar m) = b_q(u, m) = 0$ for all $m\in M$, then
    $u\in \rad(b_q)$. Hence $u=0$ in case \ref{lift-qfa}, while in case \ref{lift-qfr} we get $u\in \rad(q)$ because $q(u) = 0$, so that again $u=0$ follows, using that $\rad(q) = 0$ if $R$ is a field. Also, if $\al(U, \bar m) = b_q(U, m) = 0$, then $m\in U^\perp$, and therefore $\bar m = 0$ follows. We now get that $\al$ induces an isomorphism $\al^* \co U \simlgr (M/ U^\perp)^*$. We claim that there exists a submodule $W \subset M$ such that $M = U^\perp \oplus W$, and that therefore we have the canonical isomorphism $\can \co W\simlgr M/U^\perp$. The existence of $W$ follows in case \ref{lift-qfa} from \cite[I, (3.2)(a)]{Ba}, saying that $U^\perp$ is complemented in $M$. It is obvious in case \ref{lift-qfr} with $R$ a field. Denoting by $\can^* \co (M/U^\perp)^* \simlgr W^*$ the dual of the isomorphism $\can$, we have $\big( ( \can^* \circ \al^*)(u)\big)(w) = \al^*(u)\big( \can (w)\big) = \al^*(u)(\bar w) = b_q(u,w) = (\be(u)\big)(w)$, i.e., $W$ satisfies \eqref{lift-qf-1}. Indeed, we have $W \cap U = \{0\}$ (since $U \subset U^\perp$ and $W \cap U^\perp = \{0\}$) and with $\be = \can^* \, \circ\, \al^*$. Now \eqref{lift-qf-II} finishes the proof in case \ref{lift-qfa}, and in case \ref{lift-qfr} with $R$ a field.
    Before we can deal with a semilocal $R$ in case \ref{lift-qfr} we make a further reduction. \sm

\item \label{lift-qf-I} ({\em Reduction to constant rank}) Since $U$ is complemented, it is finitely generated projective. By \ref{qfba}\eqref{qfba-red}, we can therefore decompose $R= R_1 \times \cdots \times R_n$ and correspondingly
\[ (M,q) =  (M_1, q_1) \perp \cdots \perp (M_n, q_n), \quad
   U = U_1 \times \cdots \times U_n \]
such that each $U_i\subset M_i$ is a complemented submodule of constant rank and
a totally  isotropic submodule of the $R_i$--quadratic space $(M_i, q_i)$. If $V_i\subset M_i$, $1\le i \le n$, are submodules as in the claim of the lemma, then $V= V_1 \times \cdots \times V_n$ satisfies the conditions for $(M,q)$. Without loss of generality we can therefore assume that $U$ has constant rank, say rank $r$. It 
is then free of rank $r$.
\sm

\item ({\em Case \ref{lift-qfr} in general})  By \eqref{lift-qf-I} we can assume that $U$ is free of rank $r$. Using the notation of Lemma~\ref{lift-mod}, we know that $q_{\ka(\m)}$ is nonsingular by \ref{qfba-sing}\eqref{qfba-sing-iii}. Thus, by \eqref{lift-qf-III}, the lemma holds for $\ka(\m)$. We can therefore choose an $r$--dimensional subspace $W[\m]$ such that $U(\m) \cap W[\m] = \{0\}$ and $U(\m) \simlgr W[\m]^*$ via $b_{\ka(\m)}$. By Lemma~\ref{lift-mod}, the $W[\m]$ lift to a submodule $W$ satisfying $U \cap W = \{0\}$ and $W(\m)= W[\m]$. Moreover, the map $U \longto W^*$, induced by $b_q$, is an isomorphism by Nakayama, 
    since it is an isomorphism after passing to each $\ka(\m)$. Again \eqref{lift-qf-II} finishes the proof.
\end{inparaenum}
\end{proof}

\begin{cor}[\bf Characterization of hyperbolic spaces] \label{carhyp}
Let $(M,q)$ be a quadratic $R$--space and assume that $q$ is regular or that $R$ is semilocal. Then the following are equivalent: \sm

\begin{enumerate}[label={\rm (\roman*)}]
  \item \label{carhyp-i} $(M,q)$ is hyperbolic;

  \item \label{carhyp-ii} $M$ admits a complemented submodule $U$ satisfying $q(U) = 0$
  and $2 \rank_\p U = \rank_\p M$ for all $\p \in \Spec(R)$;

  \item \label{carhyp-iii} $M$  admits a complemented submodule $U$ satisfying $q(U) = 0$ and $U= U^\perp$.
\end{enumerate}
In this case $(M,q) \cong \HH(U)$.
\end{cor}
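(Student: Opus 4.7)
The plan is to prove the cycle by showing that (i) implies both (ii) and (iii) directly from the definition of the hyperbolic space, while (ii) and (iii) each imply (i) through a uniform application of Proposition~\ref{lift-qf}. In all four directions the assumption on $(M,q)$ (regular, or $R$ semilocal with $q$ nonsingular) matches exactly the hypotheses \ref{lift-qfa}/\ref{lift-qfr} of that proposition, so nothing extra is needed.

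For (i) $\Rightarrow$ (ii) and (i) $\Rightarrow$ (iii), write $(M,q) \cong \HH(U_0) = (U_0^* \oplus U_0, \hyp)$ and take $U = 0 \oplus U_0$. Then $U$ is complemented by $U_0^* \oplus 0$, satisfies $q(U) = 0$, and has $\rank_\p U = \rank_\p U_0 = \tfrac12 \rank_\p M$ at every $\p \in \Spec(R)$, which gives (ii). For (iii), the explicit formula for $b_\hyp$ shows that $(\vphi,v) \in U^\perp$ forces $\vphi(u) = 0$ for all $u \in U_0$ and hence $\vphi = 0$; thus $U^\perp = 0 \oplus U_0 = U$.

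For (ii) $\Rightarrow$ (i), apply Proposition~\ref{lift-qf} to the complemented totally isotropic submodule $U$. This produces a totally isotropic $V \subset M$ with $(U \oplus V, q|_{U \oplus V}) \cong \HH(U)$ and the orthogonal splitting
\begin{equation*}
M \;=\; (U \oplus V) \,\oplus\, (U \oplus V)^\perp.
\end{equation*}
Since $\rank_\p(U \oplus V) = 2 \rank_\p U = \rank_\p M$ at every $\p$ by hypothesis, the orthogonal complement $(U \oplus V)^\perp$ has rank zero everywhere and therefore vanishes, so $M \cong \HH(U)$. For (iii) $\Rightarrow$ (i), the same application of Proposition~\ref{lift-qf} gives the same $V$ and the same orthogonal decomposition; this time, any $m \in (U \oplus V)^\perp$ is a fortiori orthogonal to $U$, so $(U \oplus V)^\perp \subset U^\perp = U$, while the directness of the sum forces $(U \oplus V)^\perp \cap U = 0$. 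Hence $(U \oplus V)^\perp = 0$ and $M \cong \HH(U)$ as before.

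The substantive content has already been absorbed into Proposition~\ref{lift-qf}, which constructs the complementary isotropic subspace $V$; the corollary itself is a bookkeeping exercise translating between the three equivalent descriptions, so no genuine obstacle arises. The only subtle point to watch is ensuring that the hypothesis of Proposition~\ref{lift-qf} is indeed satisfied in each direction, but this is built into the standing assumption of the corollary.
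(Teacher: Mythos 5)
Your proposal is correct and follows essentially the same route as the paper: both (ii)$\Rightarrow$(i) and (iii)$\Rightarrow$(i) rest on Proposition~\ref{lift-qf} plus the same rank, respectively $U=U^\perp$, bookkeeping to kill $(U\oplus V)^\perp$, while the implications out of (i) are read off the definition of $\HH(U)$. The only difference is organizational -- the paper runs the cycle (i)$\Rightarrow$(ii)$\Rightarrow$(iii)$\Rightarrow$(i) whereas you prove the two reverse implications separately -- which changes nothing of substance.
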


\begin{proof}
  \ref{carhyp-i} $\implies$ \ref{carhyp-ii} being obvious because $\rank_\p U = \rank_\p U^*$, let us assume \ref{carhyp-ii} and prove \ref{carhyp-iii}. By Proposition~\ref{lift-qf},  there exists a submodule $V \subset M$ such that $(U \oplus V, q|_{U \oplus V})\cong \HH(U)$ is hyperbolic and $M = (U \oplus V) \oplus (U \oplus V)^\perp$. Since $V \cong U^*$ as $R$--modules, $\rank_\p (U \oplus V) =  \rank_\p M$, whence $U \oplus V = M$ and $U^\perp = U \oplus (U^\perp \cap V)$ with $U^\perp \cap V \cong  \{\vphi \in U^* : \vphi(U) = 0 \} = \{0\}$.
The proof of \ref{carhyp-iii} $\implies$ \ref{carhyp-i} follows the same pattern.  \end{proof}

\begin{cor}\label{carhyp-cor} Let $(M,q)$ and $(M',q')$ be regular quadratic modules. \sm

\begin{inparaenum}[\rm (a)] \item {\rm \cite[I, (4.7.i)]{Ba}} \label{carhyp-cor-a} If $(M,q)$ and $(M', q')$ are isometric, then the quadratic module $(M , q) \perp (M', -q')$ is hyperbolic: $(M,q) \perp (M',-q') \cong \HH(M)$.\sm

\item\label{carhyp-cor-b} Conversely, if $R$ is semilocal and if $(M,q) \perp (M',-q') \cong \HH(M)$, then $(M,q) \cong (M',q')$.
\end{inparaenum}
\end{cor}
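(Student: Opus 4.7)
The plan is to deduce part (b) from part (a) via the semilocal Witt cancellation recalled in \ref{qfba-sing}\eqref{qfba-wi}. Applying (a) to the trivial isometry $(M,q) \cong (M,q)$ gives $(M,q) \perp (M,-q) \cong \HH(M)$, which combined with the hypothesis yields
\[
(M,q) \perp (M',-q') \;\cong\; (M,q) \perp (M,-q).
\]
It then suffices to cancel the common left summand $(M,q)$ to obtain $(M',-q') \cong (M,-q)$, and hence $(M,q) \cong (M',q')$.

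Before invoking Witt cancellation I would first reduce to the case where $M$ has constant rank via \ref{qfba}\eqref{qfba-red}. Both sides of the hypothesized isometry have rank $2\,\rank_{\mathfrak p} M$ at every $\mathfrak p \in \Spec(R)$, so $\rank_{\mathfrak p} M = \rank_{\mathfrak p} M'$ throughout $\Spec(R)$. Consequently, the constant-rank decomposition $R = R_0 \times \cdots \times R_n$ of $M$ splits $(M',q')$ compatibly, and on the factor where the common rank vanishes the assertion is trivial.

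On a factor of constant rank $r \ge 1$, the form $-q'$ is regular (negation obviously preserves regularity), hence nonsingular, and of positive rank $r$. Applying \ref{qfba-sing}\eqref{qfba-wi} with $q_1 = q_1' = q$, $q_2 = -q'$ and $q_2' = -q$ then produces $-q' \cong -q$, and therefore $(M,q) \cong (M',q')$, as required.

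The only point that requires any care is checking the hypotheses of Witt cancellation: regularity of the cancelled summand (immediate from the assumption on $q$) and nonsingularity of positive rank of the complementary summand, which is precisely what the constant-rank reduction arranges. I do not foresee any further obstacle; in particular, no appeal to the structure of Lagrangians or to the characterization in Corollary~\ref{carhyp} is needed beyond what is already packaged into part (a).
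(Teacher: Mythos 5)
Your argument for part (b) is exactly the paper's: apply (a) to the identity isometry to get $(M,q)\perp(M,-q)\cong\HH(M)\cong(M,q)\perp(M',-q')$ and cancel the regular summand $(M,q)$ by Witt cancellation \ref{qfba-sing}\eqref{qfba-wi}, your constant-rank reduction merely making explicit the positive-rank hypothesis that the paper leaves tacit. Taking part (a) as the cited result is consistent with the statement's attribution (the paper additionally gives the short diagonal-Lagrangian proof via Corollary~\ref{carhyp}), so the proposal is correct and follows essentially the same route.
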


\begin{proof} \eqref{carhyp-cor-a} Let $f \co (M,q) \to (M',q')$ be an isometry. The quadratic form $q\perp (-q')$ is regular. The diagonal submodule $U = \{\big(m,f(m)\big) : m \in M\}\subset M \oplus M'$ is complemented by $\{ (m,0): m \in M\}$ and satisfies \ref{carhyp}\ref{carhyp-ii}. Hence $(M,q) \perp (M',-q') \cong \HH(M)$.

\eqref{carhyp-cor-b} By \eqref{carhyp-cor-a}, $(M,q) \perp (M,-q) \cong \HH(M) \cong (M,q) \perp (M',-q')$. Hence $(M,-q) \cong (M'-q')$ by Witt cancellation \ref{qfba-sing}\eqref{qfba-wi}, which implies our claim. \end{proof}
\sm

Corollary~\ref{carhyp-cor}\eqref{carhyp-cor-a} is not true for nonsingular quadratic forms, even over fields. For example, let $(M,q) = (F, \lan u\ran_q) = (M', q')$ with $F$ a field of characteristic $2$ and $u\in F\ti$. Then $(M,q)$ is nonsingular by \ref{qfba-sing}\eqref{qfba-one}, but $0 \ne (1_F, 1_F) \in \rad(q \perp (-q))$, so that $q\perp (-q)$ is singular, hence in particular not hyperbolic.

\subsection{Unimodular and isotropic vectors} \label{isotrop} Let $M$ be a finite  projective $R$--module. For $x\in M$ and $\p \in \Spec(R)$ we put $x(\p) = x \ot_R 1_{\ka(p)}$. Recall that $u\in M$ is called {\em unimodular\/} if $u$ satisfies one of the following equivalent conditions, see e.g.\  \cite[0.3]{Loos}: \begin{enumerate}[label={\rm (\roman*)}]
  \item $Ru$ is complemented and free of rank $1$,

  \item there exists $\vphi \in M^*$ satisfying $\vphi(u) = 1$,

  \item $u(\p) \ne  0$ for all $\p \in \Spec(R)$,

  \item $u(\m) \ne 0$ for all maximal $\m \in \Spec(R)$.

\end{enumerate}

Let $(M,q)$ be a quadratic module. We call $m\in M$ {\em isotropic\/} if $m$ is unimodular and $q(m) = 0$. We say $(M,q)$ is {\em isotropic\/} if $M$ contains an isotropic vector. We note some useful facts.
\sm

\begin{inparaenum}[(a)] \item\label{isotrop-a} If $m$ is a unimodular (resp.\ isotropic) vector of $(M,q)$, then $m \ot 1_S$ is a unimodular (resp.\ isotropic)   vector of $(M,q)_S$ for any $S\in \Ralg$.
\sm

\item\label{isotrop-suff} Let $R[X]$ be the polynomial ring over $R$ in the variable $X$ and let $(M,q)$ be a quadratic $R$--module. For $v=v(X) \in M \ot_R R[X]$ define the affine $R$--scheme
    \[ Z_v = \{x\in \GG_{a,R} : v(x) = 0 \},  \]
    whose $T$--points, $T\in \Ralg$, is the set $Z_v(T) = \{ t\in T: v(t)=0\}$ where $v(t) \in M \ot_R T$ is obtained by substituting $t$ for $X$.  Then
    \begin{equation} \label{isotrop-suff1}  \text{$Z_v$ empty}\quad \implies \quad \text{$v$ unimodular.}
     \end{equation}
     Indeed, if $\p \in \Spec(R[X])$, then $v(\p) = v(X \ot 1_{\ka(\p)}) \ne 0$ since otherwise $X\ot 1_{\ka(\p)} \in Z_v(\ka(\p))$.  \sm

\item \label{isotrop-c} Let $(M,q)$ be a quadratic space over $R$ and assume that $q$ is regular or that $R$ is semilocal. By Proposition~\ref{lift-qf},  any isotropic vector embeds into a hyperbolic plane $\HH = \HH(R)$ and $(M,q) = \HH \perp (M_1, q_1)$. In particular, $\rank_\p M \ge 2$ holds for every $\p \in \Spec(R)$. \sm

 \item\label{isotrop-d} Let $M$ be a projective $R$--module of constant rank $2$ and let $q\co M \to R$ be a nonsingular quadratic form. By \ref{qfba-sing}\eqref{qfba-sing-v}, $q$ is nonsingular  if and only if $q$ is regular. Hence,  by \eqref{isotrop-c}, we have the implication ``$\implies$'' of
     \begin{equation} \label{isotrop-d1}
     \text{$(M,q)$ isotropic} \quad \iff \quad (M,q) \cong \HH(R).
     \end{equation}
     Thus, in this case $M$ is free of rank $2$. The other direction in \eqref{isotrop-d1} holds because in $\HH(R) = R \oplus R$ with the hyperbolic form $\hyp$, given by $\hyp(r_1,r_2)= r_1 r_2$, the vector $(1,0)$ is isotropic.
\end{inparaenum}

\section{Springer's odd extension theorem}\label{sec:soet}

Let $(M,q)$ be a quadratic module. To simplify notation, we will often abbreviate
$q(x) = q_S(x)$ for $x\in M_S$ if $S$ is clear from the context. We will also say that {\em $q$ is $S$--isotropic\/} if $q_S$ is isotropic, cf.\ \ref{isotrop}.
 We recall that a quadratic space $(M,q)$ is a quadratic module with a nonsingular $q$.

\begin{thm}[Springer's Theorem] \label{thm_springer} Let $R$ be a semilocal ring and let $(M,q)$ be a quadratic space.  Let $S$ be a finite  $R$--algebra of constant odd degree, which is \'etale or one-generated.  If $q$ is $S$--isotropic, then  $q$ is $R$--isotropic.  \end{thm}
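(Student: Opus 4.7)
The plan is to follow the roadmap sketched in the introduction. First, reduce to $M$ of constant rank $r\ge 2$: using \ref{qfba-sing}\eqref{qfba-red}, decompose $R = \prod_i R_i$ so that $M$ has constant rank $r_i$ over each $R_i$, and observe that isotropy of $q$ (resp.\ $q_S$) is a componentwise condition. On rank-$0$ factors there is no unimodular vector, and on rank-$1$ factors $q\cong\langle u\rangle_q$ is anisotropic over every base change (since $ux^2 = 0$ with $x\in S\ti$ is impossible for $u\in R\ti$). Hence we may assume $r\ge 2$ throughout.

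The case $r=2$ is treated separately (this is the promised Lemma~\ref{ratwo}). By \ref{isotrop}\eqref{isotrop-d} the problem becomes a pure descent statement: if $(M,q)\otimes_R S \cong \HH(S)$, then $(M,q) \cong \HH(R)$. Isomorphism classes of nonsingular rank-$2$ quadratic forms are classified, via the discriminant, by classes in the flat cohomology of certain abelian affine group schemes (built from $\mu_2$ and a rank-one torus). Deligne's trace construction (Lemma~\ref{del-tr}) furnishes a transfer map on such flat cohomology whose composition with the restriction along $R\to S$ is multiplication by $[S:R]$. Since $[S:R]$ is odd and the relevant groups are $2$-torsion, a class killed by restriction must already be trivial over $R$, which settles the rank-$2$ case.

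The main case is $r\ge 3$ with $S=R[X]/(f)$ one-generated of odd degree $d$. Following Panin--Rehmann, I would first reduce modulo $\Jac(R)$. The ring $R/\Jac(R)=\prod_\ell F_\ell$ is a product of fields, and $S/\Jac(R)S = \prod_\ell F_\ell[X]/\bar f_\ell$. The unimodular isotropic vector provided by $S$-isotropy of $q$ reduces to a family of unimodular isotropic vectors for $q$ over each $F_\ell[X]/\bar f_\ell$; decomposing each of these finite $F_\ell$-algebras into its local Artinian factors and then reducing modulo the nilradical yields unimodular isotropic vectors for $q$ over every residue field $F_\ell[X]/(\bar p)$ corresponding to an irreducible factor $\bar p$ of $\bar f_\ell$. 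Since $d$ is odd, at least one such $\bar p$ must have odd degree, and the classical field-case Springer theorem \cite{EKM} then yields isotropy of $q$ over $F_\ell$, hence over $R/\Jac(R)$ itself. The crucial step is now to lift this isotropic vector from $R/\Jac(R)$ back to $R$: unimodularity lifts automatically, but the condition $q(v)=0$ does not. For $r\ge 3$ the affine scheme of isotropic vectors of $q$ in $M$ (minus the zero section) is smooth over $R$ and homogeneous under $\mathbf{O}(q)$, so Demazure's conjugacy theorem for parabolic subgroups of reductive $R$-group schemes, applied to the stabiliser of an isotropic line, provides the lift. This is the content of the crucial Corollary~\ref{cor_qquadric} and concludes the one-generated case.

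Finally, when $S$ is \'etale of odd degree, the Bayer--Fluckiger--First--Parimala embedding theorem \cite{BFP} places $S$ inside a one-generated $R$-algebra $S'$ of odd degree; then $q_S$ isotropic implies $q_{S'}$ isotropic by base change, and the previous case concludes. The principal obstacle throughout is the lifting step from $R/\Jac(R)$ to $R$ in the one-generated case: the lack of any naive openness for isotropy means one really does need both the rank hypothesis $r\ge 3$ (to ensure smoothness of the isotropic locus) and Demazure's conjugacy of parabolics in reductive group schemes to make the passage go through.
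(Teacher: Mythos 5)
Your reductions to constant rank, your rank-$2$ argument via the Deligne trace (essentially the paper's Lemma~\ref{ratwo} combined with the Example after Lemma~\ref{lem_ttrace}), and your final reduction of the \'etale case to the one-generated case via \cite{BFP} all match the paper. The fatal problem is your main case $r\ge 3$ with $S$ one-generated. You establish isotropy of $q$ over $R/\Jac(R)$ and then claim that Corollary~\ref{cor_qquadric} (via Demazure conjugacy) lifts this isotropic vector back to $R$. It does not: the lifting statements (Proposition~\ref{prop_qquadric}\ref{prop_qquadric3}\ref{prop_qquadric3-ii} and Corollary~\ref{cor_qquadric}) carry the hypothesis $\uQ(S)\ne\emptyset$, i.e.\ isotropy is already known over the algebra one lifts \emph{to}; invoked with that algebra equal to $R$, your argument presupposes exactly the conclusion and is circular. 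Moreover no such lifting from $R/\Jac(R)$ to $R$ can exist in general: over $R=\ZZ_{(5)}$ the form $\langle 1,1,1,1\rangle$ is isotropic over the residue field $\FF_5$ but anisotropic over $R$ (being anisotropic over $\QQ$), so isotropy over $R/\Jac(R)$ does not imply isotropy over $R$; smoothness and homogeneity of the quadric lift points along Henselian pairs or infinitesimal thickenings, not along $R\to R/\Jac(R)$.

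What the paper actually does in this case is an induction on $d=\deg S$ (the Panin--Rehmann mechanism motivated in \ref{lem_PR-inter}), and this is the ingredient your proposal is missing. Once $q_{\kappa_i}$ is known isotropic over each residue field $\kappa_i$ (your field-level argument here agrees with the paper's step~\eqref{thm_springer-III}), Lemma~\ref{lem_PR} produces polynomial vectors $v_i(X)\in M\ot_R\kappa_i[X]$ with $q(v_i(X))$ of degree $2d-2$ and divisible by $P_{\kappa_i}$, and with $Z_{v_i}$ empty; their values at $\theta_i$ are isotropic vectors of $M\ot_R S_{\kappa_i}$. Corollary~\ref{cor_qquadric} is then applied \emph{to the algebra $S$ itself}, where $\uQ(S)\ne\emptyset$ holds by hypothesis, to lift these carefully chosen residue vectors to an isotropic $v\in M\ot_R S$. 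Writing $v=v(\theta)$ with $v(X)\in M\ot_R R[X]$ of degree $\le d-1$, one gets $q(v(X))=u\,P(X)\,Q(X)$ with $u\in R\ti$ and $Q$ monic of degree $d-2$, and $v(X)$ modulo $Q$ is an isotropic vector over the one-generated algebra $T=R[X]/(Q)$ of degree $d-2$; iterating descends the degree by two at each step until it reaches $1$, i.e.\ until $T=R$, which is where isotropy over $R$ is finally obtained. Without this degree-descent your proof does not go through.
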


\textbf{Remarks.} The formulation of Theorem~\ref{thm_springer} captures the essence of our work. It is immediate that the theorem extends to $S\in \Ralg$ admitting a tower $R=S_0 \subset S_1  \subset \cdots \subset S_n = S$
for which each $S_i$ is a finite $S_{i-1}$--algebra of constant odd degree which is one-generated or \'etale. The same observation applies to the corollaries stated below. We will use this observation in the proof of case~\ref{prop_odd-b} in Corollary~\ref{prop_odd}.
\sm

The proof of this theorem will be given in \ref{thm_springer-pr}.  Lemma~\ref{ratwo} proves Springer's Theorem in the case of $\rank M =2$. It involves a much weaker condition than $R$ being semilocal or that $S$ is  one-generated or \'etale. By \ref{qfba-sing}\eqref{qfba-sing-v}, a quadratic form on such an $R$--module $M$ is nonsingular if and only if it is regular.

\begin{lem}[Rank $2$]\label{ratwo}
  Let $S\in \Ralg$ be a finite $R$--algebra of constant odd degree and let $(M,q)$ be a quadratic space of constant rank $2$. Then $(M,q)$ is isotropic if and only if $(M,q)_S$ is isotropic.
\end{lem}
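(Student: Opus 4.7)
The ``only if'' direction is immediate from \ref{isotrop}\eqref{isotrop-a}, so I turn to the converse. Since $M$ has constant rank $2$, \ref{qfba-sing}\eqref{qfba-sing-v} forces $q$ to be regular, and by \ref{isotrop}\eqref{isotrop-d} the claim reduces to: if $(M,q)_S \cong \HH(S)$, then $(M,q) \cong \HH(R)$.

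My plan is cohomological. To $(M,q)$ I attach its even Clifford algebra $D = \Cliff_0(M,q)$, which for a regular rank-$2$ quadratic form is a rank-$2$ \'etale commutative $R$-algebra in every residue characteristic (a standard fact), and therefore defines a class $[D] \in H^1\fppf(R, \ZZ/2\ZZ)$. Because $R$ is semilocal, so is the finite $R$-algebra $D$, and hence $\Pic(D) = 0$. Clifford multiplication makes $M$ a $D$-module; locally (where $D \cong R \times R$) this exhibits $M$ as a rank-$1$ invertible $D$-module, so globally $M$ is a rank-$1$ projective $D$-module, and $\Pic(D) = 0$ then gives $M \cong D$ as $D$-module. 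Under this identification $q$ becomes, up to a scalar in $R\ti$, the norm form $\rmN_{D/R}$; in particular
\[
(M,q) \text{ isotropic} \iff \rmN_{D/R} \text{ isotropic} \iff D \cong R \times R \iff [D] = 0 \in H^1\fppf(R, \ZZ/2\ZZ).
\]

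Next I compare the invariant over $R$ and $S$. The hypothesis $(M,q)_S \cong \HH(S)$ gives $D_S \cong \Cliff_0(\HH(S)) \cong S \times S$, so $\res_{S/R}([D]) = 0$. Since $\Spec(S) \to \Spec(R)$ is finite locally free of constant odd degree $d$, Lemma~\ref{del-tr} furnishes a Deligne corestriction $\mathrm{cor}_{S/R}\co H^1\fppf(S, \ZZ/2\ZZ) \to H^1\fppf(R, \ZZ/2\ZZ)$ whose composition with $\res_{S/R}$ is multiplication by $d$. On the $2$-torsion group $H^1\fppf(R, \ZZ/2\ZZ)$, multiplication by odd $d$ is the identity, hence $\res_{S/R}$ is injective, $[D] = 0$, and $(M,q) \cong \HH(R)$.

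The main obstacle is the translation ``$(M,q)$ isotropic $\iff [D]=0$'' when $2$ is not invertible: although the even Clifford algebra of a regular rank-$2$ form is standardly known to be \'etale of rank $2$, identifying $q$ with a scalar multiple of $\rmN_{D/R}$ requires care. One approach is to verify locally, or by descending from an \'etale cover splitting $D$, that a non-trivial idempotent of $D \cong R \times R$ produces via right multiplication on $M \subset \Cliff(M,q)$ a decomposition of $M$ into two complemented totally isotropic rank-$1$ $R$-submodules; these are free by semilocality, and Corollary~\ref{carhyp} then delivers $(M,q) \cong \HH(R)$. Once that is established, the cohomological transfer via Deligne's trace is routine.
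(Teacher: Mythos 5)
Your route is essentially the paper's: pass to the even Clifford algebra $A$ (your $D$), observe that $A_S$ is split because $(M,q)_S\cong\HH(S)$, and use the odd-degree transfer for the constant group scheme $\ZZ/2\ZZ$ coming from Deligne's trace (\ref{del-tr}, Lemma~\ref{lem_ttrace} and the Example following it) to conclude that $A$ itself is split; citing Lemma~\ref{lem_ttrace} directly is cleaner than packaging this as a corestriction on $H^1$, but the content is the same. The genuine divergence is that you invoke semilocality of $R$ twice -- once to get $\Pic(D)=0$, hence $M\cong D$ and $q\cong u\,\rmN_{D/R}$, and again in your fallback (``free by semilocality'') -- whereas Lemma~\ref{ratwo} does not assume $R$ semilocal: the remark preceding it stresses that the rank-$2$ case needs much weaker hypotheses than Theorem~\ref{thm_springer}. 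So, as written, your argument proves the lemma only for semilocal $R$ (which is, admittedly, the only case used in the proof of the theorem), not the statement as given.

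The paper's proof never trivializes $M$ over $A$: it uses only that $M$ is a rank-one projective right $A$-module with $q(ma)=q(m)\,n_A(a)$. Once the transfer gives $A=R\times R$, the idempotents yield $M=Me_1\oplus Me_2$ with $q(Me_i)=q(M)\,n_A(e_i)=0$, i.e.\ two complemented totally isotropic rank-one submodules, and Corollary~\ref{carhyp} applies because $q$ is regular by \ref{qfba-sing}\eqref{qfba-sing-v}; no Picard group and no freeness of the $Me_i$ enter in concluding that $(M,q)\cong\HH(Me_1)$ is hyperbolic. This is exactly your last paragraph with the unnecessary steps stripped out. One honest caveat in your favour: the final passage from ``hyperbolic'' to ``isotropic'' (a unimodular zero of $q$) does require the rank-one Lagrangian to admit a unimodular element, which is the very point your appeal to semilocality was covering and which the paper's proof passes over quickly; over a semilocal $R$ rank-one projectives are free and the issue disappears, which is the setting in which the lemma is actually applied.
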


\begin{proof}
  It is clear that $(M,q)$ isotropic $\implies (M,q)_S$ isotropic. Let us therefore assume that $(M,q)_S$ is isotropic. Let $A$ be the even part of the Clifford algebra $C=C(q)$ associated with $(M,q)$; we can and will identify $M$ with the odd part of $C$. It is known \cite[V, (2.1)]{K} that $A$ is a quadratic \'etale $R$--algebra; let $n_A$ be its norm.  The multiplication of $C$ makes $M$ a projective right $A$--module of rank $1$ satisfying $q(ma) = q(m) n_A(a)$ for $m\in M$ and $a\in A$.

  Since $q_S$ is isotropic, $(M,q)_S$ is a hyperbolic $S$--plane by \eqref{isotrop-d1}. Hence $C\ot_R S \cong C(q_S)$ is the split quaternion $S$--algebra and $A_S$ is the split quadratic \'etale $S$--algebra. By \cite[III, (4.1.2)]{K}, the automorphism group scheme of the $R$--algebra $A$ is the abelian constant $R$--group scheme $\ZZ/2\ZZ$. Since $S$ has constant odd degree, we are in the setting of the Example in \ref{lem_ttrace}. Thus already $(A, n_A)$ is split, i.e., $A=R \times R$ and $n_A$ is hyperbolic. Let $e_1 = (1_R, 0)$ and $e_2=(0, 1_R)$ be the standard idempotents of $A$. Then $M=Me_1 \times Me_2$ is the direct product of projective $R$--modules of rank $1$ satisfying $q(Me_i) = q(M)n_A(e_i) = 0$ for $i=1,2$. Finally, by Corollary~\ref{carhyp}, $(M,q)$ is hyperbolic, and hence in particular isotropic.  \end{proof}
\sm

Our proof of Springer's Theorem also uses the following Lemma~\ref{lem_PR}, a variation of  \cite[Prop.~1.1]{panin-rehmann}.

\begin{lem}\label{lem_PR} Let $k$ be a field, let  $(V,q)$ be an isotropic quadratic $k$--space of dimension $r \geq 3$, and let $P=P(X)\in k[X]$ be a monic polynomial
of degree $d\ge 1$. Then there exists $v=v(X) \in V \otimes_k k[X]$ 
satisfying the following conditions:

\begin{enumerate}[label={\rm (\roman*)}]
 \item   \label{lem_PR_i} $q( v(X))\in k[X]$ is a  polynomial of  degree  $2d-2$,  which is divisible by $P$;

  \item \label{lem_PR_ii}  the $k$--scheme $Z_{v(X)}= \{ x  \in \GG_a : v(x)=0 \} $ is empty.
\end{enumerate}
In particular, $v(X)$ is unimodular.
\end{lem}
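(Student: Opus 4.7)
The plan is to build $v(X)$ by combining a polynomial element of a hyperbolic plane $\mathbb{H}\subset V$ with a \emph{constant} anisotropic vector from the orthogonal complement of $\mathbb{H}$; the constant vector will make condition (ii) automatic, leaving only a trivial polynomial identity for (i).

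Since $(V,q)$ is isotropic over the field $k$ (which is semilocal), \ref{isotrop}\eqref{isotrop-c} yields an orthogonal decomposition $V = \mathbb{H}\perp V_1$, where $\mathbb{H} = ke_1 \oplus ke_2$ is a hyperbolic plane with $q(\alpha e_1 + \beta e_2) = \alpha\beta$, and $V_1$ is nonsingular (by \ref{qfba-sing}\eqref{qfba-sing-vii}) of dimension $r-2\ge 1$. A nonsingular form of positive dimension over a field cannot vanish identically --- otherwise the polar form would vanish as well and $V_1$ would coincide with its own radical --- so I can fix $u\in V_1$ with $c:=q(u)\in k^\times$. Assuming first $d\ge 2$, I set
\[
  v(X)\;=\;-c\,e_1 \;+\; \bigl(1 + P(X)\,X^{d-2}\bigr)\,e_2 \;+\; u \;\in\; V\otimes_k k[X].
\]
The orthogonality $\mathbb{H}\perp V_1$ and the identity $q(\alpha e_1+\beta e_2)=\alpha\beta$ give
\[
  q\bigl(v(X)\bigr) \;=\; (-c)\bigl(1 + P(X)\,X^{d-2}\bigr) + q(u) \;=\; -c\,P(X)\,X^{d-2},
\]
a polynomial of degree exactly $d+(d-2)=2d-2$ and visibly divisible by $P(X)$, settling (i). For (ii), the $V_1$-projection of $v(X)$ equals the constant nonzero vector $u$, so $v(t)\ne 0$ in $V\otimes_k T$ for every $k$-algebra $T$ and every $t\in T$; hence $Z_{v(X)}=\emptyset$, and unimodularity of $v(X)$ is then automatic from \eqref{isotrop-suff1}.

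The degenerate case $d=1$ is disposed of by taking $v(X)=e_1$ (constant, unimodular, isotropic with $q(v)=0$); it is in any event unused inside the main theorem, where $S=k[X]/(P)\cong k$ when $d=1$. The conceptual crux --- and the only real obstacle --- is the observation that by placing a nonzero \emph{constant} $V_1$-component into $v(X)$, condition (ii) becomes automatic and the joint constraint of (i) reduces to a single polynomial identity rather than a coprimality condition on the hyperbolic coordinates; beyond that trick the argument is pure bookkeeping.
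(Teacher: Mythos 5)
Your construction is correct and follows essentially the same route as the paper: reduce to a subspace $\HH \perp ku \subset V$ with $c=q(u)\in k^\times$ (using that the nonsingular complement of $\HH$ represents a unit), then write down an explicit $v(X)$ having a nonzero constant coordinate, which makes condition (ii) (hence unimodularity) automatic and turns (i) into a one-line computation. The only difference is bookkeeping: the paper places $X^{d-1}$ in the anisotropic slot and obtains the hyperbolic coordinate $R(X)$ from the Euclidean division $X^{2d-2}=P(X)Q(X)+R(X)$, whereas you keep the anisotropic component constant and take the hyperbolic coordinate $1+P(X)X^{d-2}$, dispensing with the division; at $d=1$ your fallback gives only $q(v(X))=0$, but the paper's own construction degenerates in exactly the same way there.
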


\begin{proof} Since $q$ is isotropic, $(V,q)$ contains
a hyperbolic plane $\HH$ and $(V,q) = \HH \perp (W,q|_W)$ for $(W, q|_W) = \HH^\perp$, see \ref{isotrop}\eqref{isotrop-c} or \cite[7.13]{EKM}. The quadratic module $(W,q|_W)$ contains $w\in W$ with $q(w) =:a \in k\ti$. In view of our claims, it is then no harm to replace $V$ by $\HH \oplus ka$. Thus $q$ is given by  $q(x,y,z)=xy + a z^2$, which is nonsingular by \ref{qfba-sing}\eqref{qfba-sing-vii} and \ref{qfba-sing}\eqref{qfba-one} (but not regular in characteristic $2$).

Since $P$ is monic, the Euclidean division algorithm yields  unique polynomials $Q(X)\allowbreak  \in k[X]$ of degree $d-2$ and $R(X)$ of degree $\leq d-1$ such that $X^{2d-2}=
P(X) Q(X) + R(X)$.  We define $v(X)= ( -a , R(X),  X^{d-1} )\in V\ot k[X]$. Then
 $q(v(X))= -a R(X)+ a X^{2d-2}= a P(X) Q(X)$. Thus, the  condition \ref{lem_PR_i} is fulfilled with $q(v(X))= a P(X) Q(X)$. Since the first component of $v(X)$ is $-a$,  the condition \ref{lem_PR_ii} is satisfied too. It implies unimodularity of $v(X)$ by \eqref{isotrop-suff1}.  \end{proof}

\subsection{Consequences of Lemma~\ref{lem_PR}.}\label{lem_PR-inter}
As motivation for step~\eqref{thm_springer-IV} in the proof of  Theorem~\ref{thm_springer} below, we discuss some consequences of Lemma~\ref{lem_PR}. Step~\eqref{thm_springer-IV} will be more technical, but uses the same ideas. Let us put
\[ S = k[X]/(P), \quad \theta = X + (P) \in S.\]
Then $S$ is one-generated with $\theta $ as primitive element. The element
$v(\theta) \in (V \ot_k k[X])\ot_{k[X]} S = V\ot_k S$ is unimodular since it is obtained from the unimodular $v(X)$ by base change, see \ref{isotrop}\eqref{isotrop-a}, and it satisfies $q\big(v(\theta)\big) = 0$ by  condition~\ref{lem_PR_i} of \ref{lem_PR}, i.e., $v(\theta)$ is an isotropic vector in $V\ot_k S$.

Again by condition~\ref{lem_PR_i} of \ref{lem_PR}, the exists $u\in k\ti$ and $Q(X) \in k[X]$ of degree $d-2$ such that $q\big(v(X)\big) = u P(X) Q(X) \in k[X]$. Let
\[ T = k[X]/(Q), \quad \vth = X + (Q) \in T. \]
Then $T$ is one-generated of degree $d-2$ with primitive element $\vth$. The same arguments showing that $v(\theta)$ is an isotropic vector proves that $v(\vth) \in V \ot_k T$ is isotropic.

In  step~\eqref{thm_springer-IV} of the proof of  Theorem~\ref{thm_springer} we will know that $V\ot_k S$ is isotropic and use a refinement of the argument above to conclude that $V\ot_k T$ is isotropic, the point being that $\deg T = \deg S - 2$.  \ms

\subsection{Proof of Theorem~\ref{thm_springer}}\label{thm_springer-pr}
\begin{inparaenum}[(I)]
  \item {\em Reduction to $M$ free of rank $r\ge 3$.}
  Let $R= R_0 \times \cdots \times R_n$ and $(M,q) = (M_0, q_0) \perp \cdots \perp (M_n,q_n)$ be the rank decomposition of $(M,q)$ as in \ref{qfba-sing}\eqref{qfba-red}. Thus, $M_i$ is a projective $R_i$--module of rank $i$ and each $q_i \co M_i \to R_i$ is a nonsingular quadratic form. The $R$--algebra $S$ decomposes correspondingly, $S=S_0 \times \cdots \times S_n$  where each $S_i$ is a finite $R_i$--algebra of degree $d=\deg S$. We have
  \[ M \ot_R S \cong (M_0 \ot_{R_0} S_0) \times \cdots \times (M_n \ot_{R_n} S_n) \]
 with each $M_i \ot_{R_i} S_i$ being projective of rank $i$ as $S_i$--module. Since $q_{S}$ is isotropic, so is every $(q_i)_{S_i}$. By \ref{isotrop}\eqref{isotrop-c}, $M_i \ot_{R_i} S_i = 0$ for $i=0,1$. Since in both cases $(S$ one-generated or $S$ \'etale) the $R_i$--modules $S_i$ are faithfully flat, we get $M_0=0=M_1$ and $R_0=0 = R_1$.

 In the decomposition $R= R_2 \times \cdots \times R_n$, each $R_i$ is a semilocal ring.  We have already observed that $(M,q)_S$ is isotropic if and only if every $M_i \ot_{R_i} S_i$ is isotropic. Since the analogous fact holds for $(M,q)$, it suffices to prove that every $(M_i, q_i)$ is isotropic. Thus, without loss of generality, we can assume that $M$ has constant rank $r \ge 2$. The case $r=2$ has been dealt with in Lemma~\ref{ratwo}. We can therefore assume that $M$ has rank $r\ge 3$. Since $R$ is semilocal, this implies %
 that $M$ is free of rank $r$.
\sm

\item \label{thm_springer-III} {\em $R=k$ is a field and $S$ is one--generated.}
In this case $S\cong k[X]/P$ for some monic $P\in k[X]$. Let $P= P_1^{e_1} \cdots P_n^{e_n}$ be the prime factor decomposition of $P$ in $k[X]$, and put $L_i = k[X]/P_i$. Since $d=\dim_k S = \sum_i e_i [L_i : k]$ is odd, one of the $[L_i : k]$ is odd. Then $L =L_i\in \Salg$ and thus $q_L = (q_S) \ot_S L$ is isotropic. Since
$L/k$ has odd degree, the classical Springer Odd Degree Extension Theorem \cite[Cor.~18.5]{EKM} says that $q$ is isotropic.
\sm

\item \label{thm_springer-IV} {\em $R$ semilocal and $S$ is one-generated.} As before, let $S=R[X]/(P)$ where $P \in R[X]$ is a monic polynomial of degree $d$. We denote by $\kappa_1, \dots, \kappa_c$ the residue fields  of $R$. For each $i$, $1\le i \le c$, the $\kappa_i$--algebra $S_{\ka_i} = S \otimes_R \kappa_i$ is one-generated, namely $S_{\ka_i} = \ka_i[X]/P_{\ka_i}$ for $P_{\ka_i} = P \ot_R 1_{\ka_i}$, and of odd degree $d$. Since $q$ is $S$--isotropic, it is also $S \otimes_R \kappa_i$-isotropic. The case~\eqref{thm_springer-III} then shows that $q_{\kappa_i}$ is isotropic.
    Now Lemma \ref{lem_PR} provides  unimodular elements $v_i(X) \in (M \ot_R \ka_i)\ot_{\ka_i} \ka_i[X] = M\otimes_R \kappa_i[X]$ such that $q( v_i(X))$ is the product of a unit in $\kappa_i^\times$ and a monic polynomial of  degree $2d-2$ which is divisible by $P_{\kappa_i}$    and has the property that the $\ka_i$--scheme
\begin{equation*}\label{cond_unimod}
     Z_{v_i} = \{ x  \in \GG_{a, \kappa_i} :  v_i(x)=0 \} \text{ is empty.}
\end{equation*}
    Let  $\theta=X + (P) \in S$ and denote by $\theta_i$ its image in $S_{\kappa_i}$.  Then $v_i(\theta_i)$ is obtained from the unimodular $v_i(X)$ by  base change and is  therefore unimodular. It also satisfies $q( v_i(\theta_i))=0$ since $q(v_i(X))$ is divisible by $P_{\ka_i}$. In other words, $v_i(\theta_i)$ is an $S_{\ka_i}$--isotropic vector.

According to Corollary \ref{cor_qquadric} of the appendix,
the $v_i(\theta_i)'s \in M \otimes_R S_{\kappa_i}$ lift
to an isotropic $v \in M \otimes_R S $. We decompose $v= m_0 +  m_1 \theta + \cdots +  m_{d-1} \theta^{d-1}$ where the $m_j$'s belong to $M$, and define $v(X)= m_0 +   m_1 X + \cdots + m_{d-1} X^{d-1} \in M\ot_R R[X]$. By construction $q(v(X)) \in P(X) R[X]$ is a polynomial of degree $\leq 2 d-2$. Since the specialization to each $\kappa_i[X]$ is of degree $2 d-2$, it follows that  $q(v(X))$ is the product of a unit $u \in R^\times$ and a monic polynomial of degree $2d-2$. Summarizing, we have  that $q(v(X)) = u \, P(X)\,  Q(X)$ with $Q(X)$ monic of degree $d-2$ and $u \in R^\times$.

Since $Z_{v_i} = \{ t  \in \GG_{a, \kappa_i} :  v_i(t)=0 \} $
is empty for each $i$, it follows that  $Z_v = \{ x  \in \GG_{a, R} : v(x)=0 \} $ is empty too. We define $T= R[X]/Q(X)$. Then $w= v(X)$ modulo $Q$ is an
isotropic vector of $M_T$. Thus, $q$ is $T$-isotropic with
$T$ one-generated of degree $d-2$. We continue the induction until we reach
$d-2=1$ and can conclude that $M$ is isotropic. \sm

\item {\em $S$ is \'etale.} According to \cite[Prop.~7.3]{BFP} there exists a finite \'etale $R$--algebra $T$ of constant odd degree such that $S\ot_R T$ is one-generated as $R$--algebra. The paper \cite{BFP} assumes throughout that $2\in R\ti$, but the proof of the quoted proposition works for arbitrary $R$.

Since $q$ is $S$-isotropic, it is a fortiori  $S \otimes_R T$-isotropic. Since $S\ot_R T$ has constant odd degree, the preceding case \eqref{thm_springer-IV} shows that $q$ is isotropic. \end{inparaenum} \hfill{\qed}%
\ms

As in the case of fields, see e.g.\ \cite[II]{Sc}, Theorem~\ref{thm_springer} has a number of consequences worth stating. First, by \ref{isotrop}\eqref{isotrop-c}, Springer's Theorem says: if $(M,q)_S$ contains a hyperbolic plane $\HH$, then so does $(M,q)$. Corollary~\ref{cor-springer} says that this is true for arbitrary hyperbolic spaces. We say {\em a quadratic module $(M,q)$ contains a quadratic module $(M_1, q_1)$\/} if there exists a complemented submodule $N\subset M$ such that
$(M_1,q_1) \cong (N,q|_N)$. In this case, we usually identify $(M_1, q_1)=(N,q|_N)$.  We recall that if $(M_1, q_1)$ is regular, e.g., a hyperbolic space, then
$(M, q) = (M_1, q_1) \perp (M_1, q_1)^\perp$ by \ref{qfba}\eqref{qfba-or}.

\begin{cor}\label{cor-springer} Let $R$, $S$ and $(M,q)$ be as in Theorem~{\rm \ref{thm_springer}}. If $(M,q)_S$ contains a hyperbolic space $\HH(N')$ with $N'$ projective of constant rank $r$, then $(M,q)$ contains a hyperbolic space $\HH(N)$ with $N$ projective of rank $r$.
\end{cor}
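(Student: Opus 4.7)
The plan is to argue by induction on $r$. The two main inputs are Springer's Theorem~\ref{thm_springer}, which delivers a single hyperbolic plane descending to $R$, and Witt cancellation \ref{qfba-sing}\eqref{qfba-wi} over the semilocal ring $S$, which I use to peel that plane off the given hyperbolic subspace so that the leftover piece can be fed back into the induction applied to the orthogonal complement $(M_1,q_1) \subset (M,q)$. The key structural fact that makes this work is that $S$ itself is semilocal, being a finite algebra over the semilocal $R$; consequently the projective $S$-module $N'$ of constant rank $r$ is free and admits a direct decomposition $N' = S e_1 \oplus N''$ with $N'' \cong S^{r-1}$.

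For the base case $r=1$, any generator of the free rank-one Lagrangian $N' \subset \HH(N') \subset (M,q)_S$ is an isotropic vector of $(M,q)_S$; Theorem~\ref{thm_springer} then makes $(M,q)$ isotropic, and \ref{isotrop}\eqref{isotrop-c} splits off $\HH(R)$. For the inductive step $r \geq 2$, the same isotropy argument yields $(M,q) = \HH(R) \perp (M_1, q_1)$ with $(M_1, q_1)$ a quadratic space. Using additivity $\HH(N') \cong \HH(Se_1) \perp \HH(N'')$ and the orthogonal splitting $(M,q)_S = \HH(N') \perp W$ coming from \ref{qfba}\eqref{qfba-or}, I compare the two decompositions
\[
\HH(S) \perp (M_1, q_1)_S \;\cong\; (M,q)_S \;\cong\; \HH(Se_1) \perp \HH(N'') \perp W.
\]
Since the two outer summands are isomorphic regular hyperbolic planes and $(M_1, q_1)_S$ is nonsingular of rank at least $2r-2 \geq 2$, Witt cancellation \ref{qfba-sing}\eqref{qfba-wi} gives $(M_1, q_1)_S \cong \HH(N'') \perp W$. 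Thus $(M_1, q_1)_S$ contains $\HH(N'')$ with $N''$ projective of rank $r-1$, the inductive hypothesis produces $\HH(N_1) \subset (M_1, q_1)$ with $N_1$ projective of rank $r-1$, and additivity of the hyperbolic construction yields the desired $\HH(R \oplus N_1) \cong \HH(R) \perp \HH(N_1) \subset (M,q)$, with $R \oplus N_1$ projective of rank $r$.

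The step demanding most care is the applicability of Witt cancellation: it forces me to separate the case $r=1$ (where $(M_1,q_1)$ could degenerate to rank zero and no cancellation is needed) from $r \geq 2$ (where the rank bound $2r-2 \geq 2$ automatically supplies the positive-rank hypothesis). Everything else---semilocality of $S$, freeness and splittability of $N'$ into a rank-one summand, additivity of the functor $\HH(-)$, and preservation of nonsingularity under base change \ref{qfba-sing}\eqref{qfba-sing-iii}---is routine.
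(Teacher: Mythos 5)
Your argument is correct, and it is the standard induction the paper has in mind: the paper states Corollary~\ref{cor-springer} without proof, as an immediate consequence of Theorem~\ref{thm_springer}, and your route --- peel off one hyperbolic plane over $R$ via Theorem~\ref{thm_springer} together with \ref{isotrop}\eqref{isotrop-c}, then cancel a hyperbolic plane over the semilocal ring $S$ by Witt cancellation \ref{qfba-sing}\eqref{qfba-wi} and induct on $r$ --- is exactly the expected one. The only points you use tacitly are harmless: the rank bound $\rank_\p M_1 \ge 2r-2$ at \emph{every} prime (needed for the positive-rank hypothesis in cancellation) uses that $\Spec(S)\to\Spec(R)$ is surjective, which holds since $S$ is finite projective of constant degree, hence faithfully flat; and semilocality of $S$, which the paper itself records in the proof of Theorem~\ref{thm_conj_demazure}\ref{thm_conj_demazure-b}. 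Your separation of the case $r=1$ from $r\ge 2$ correctly handles the positive-rank requirement.
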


\begin{cor} \label{cor-springer2} Let $R$, $S$ and $(M,q)$ be as in Theorem\/~{\rm \ref{thm_springer}}, let $(M_1, q_1)$ be a regular quadratic $R$--module such that  $(M_1, q_1)_S$ is contained in $(M,q)_S$. Then $(M_1, q_1)$ is contained in $(M,q)$. \end{cor}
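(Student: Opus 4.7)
The plan is to apply Corollary~\ref{cor-springer} to the orthogonal sum $(M,q) \perp (M_1,-q_1)$ and then conclude by Witt cancellation.

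First, using the rank decomposition \ref{qfba-sing}\eqref{qfba-red}, I would reduce to the case where $M_1$ has constant rank $r$. Next, I would observe that $(M,q) \perp (M_1, -q_1)$ is a nonsingular quadratic $R$--space: this follows from \ref{qfba-sing}\eqref{qfba-sing-vii} together with the regularity of $(M_1,-q_1)$. Since $(M_1, q_1)_S$ is regular and embedded in $(M,q)_S$, \ref{qfba}\eqref{qfba-or} provides a splitting $(M,q)_S \cong (M_1,q_1)_S \perp (N, q_N)$, and combined with Corollary~\ref{carhyp-cor}\eqref{carhyp-cor-a} this yields
\[ (M,q)_S \perp (M_1,-q_1)_S \,\cong\, \HH\bigl((M_1)_S\bigr) \perp (N, q_N). \]
So after base change to $S$, the nonsingular space $(M,q) \perp (M_1,-q_1)$ contains a hyperbolic summand on the constant-rank module $(M_1)_S$.

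Applying Corollary~\ref{cor-springer} then descends this hyperbolic summand to $R$, giving
\[ (M,q) \perp (M_1,-q_1) \,\cong\, \HH(N_0) \perp (M_4, q_4) \]
for some projective $R$--module $N_0$ of rank $r$. Since $R$ is semilocal, finite projective $R$--modules of constant rank $r$ are free, hence $N_0 \cong R^r \cong M_1$ and $\HH(N_0) \cong \HH(M_1)$. Adding $(M_1, q_1)$ orthogonally to both sides and applying Corollary~\ref{carhyp-cor}\eqref{carhyp-cor-a} on the left gives
\[ (M,q) \perp \HH(M_1) \,\cong\, (M_1, q_1) \perp \HH(M_1) \perp (M_4, q_4), \]
and cancelling the regular form $\HH(M_1)$ via Witt cancellation \ref{qfba-sing}\eqref{qfba-wi} yields $(M,q) \cong (M_1, q_1) \perp (M_4, q_4)$, which is exactly the desired containment.

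The main obstacle is bridging the gap between the abstract hyperbolic space $\HH(N_0)$ produced by Corollary~\ref{cor-springer} and the specific $\HH(M_1)$ needed for cancellation. This is resolved by the semilocal hypothesis, which forces $N_0$ and $M_1$---both projective of the same constant rank---to be free, and hence isomorphic as $R$--modules.
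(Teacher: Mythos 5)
Your proof is correct and follows essentially the same route as the paper: reduce $(M_1,q_1)$ to constant rank, split off $(M_1,q_1)_S$ inside $(M,q)_S$, use Corollary~\ref{carhyp-cor}\eqref{carhyp-cor-a} to exhibit $\HH(M_{1,S})$ inside $\bigl(q\perp(-q_1)\bigr)_S$, descend the hyperbolic summand via Corollary~\ref{cor-springer}, and finish with Witt cancellation~\ref{qfba-sing}\eqref{qfba-wi}. The only (immaterial) difference is at the end, where you add $q_1$ back and cancel $\HH(M_1)$ while the paper rewrites $\HH(M_1)\cong q_1\perp(-q_1)$ and cancels $-q_1$ directly; your explicit checks (nonsingularity of $q\perp(-q_1)$ and freeness of $N_0$ and $M_1$ over the semilocal $R$, so that $\HH(N_0)\cong\HH(M_1)$) are points the paper leaves implicit.
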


\begin{proof} We apply the rank decomposition~\ref{qfba-sing}\eqref{qfba-red} to $(M_1, q_1)$ and can then without loss of generality assume that $(M_1, q_1)$ has constant rank. Since $(M_1, q_1)_S$ is regular, the nonsingular quadratic $S$--space $(M,q)_S$ decomposes,
\[
    (M,q)_S \cong (M_1, q_1)_S \perp (M'_2, q'_2)
 \]
 where $(M'_2, q'_2)$ is a nonsingular $S$--space by \ref{qfba-sing}\eqref{qfba-sing-vii}. By Corollary~\ref{carhyp-cor}, the quadratic $S$--module $q_S \perp (-q_1)_S$ contains a hyperbolic $S$--space isometric to $\HH(M_{1,S})$.  Hence by Corollary~\ref{cor-springer}, there exists a nonsingular quadratic $R$--module $(M_2, q_2)$ such that
\[
 q\perp (-q_1) \cong \HH(M_1) \perp q_2 \cong q_1 \perp (-q_1) \perp q_2.
\]
Canceling $-q_1$ by Witt cancellation~\ref{qfba-sing}\eqref{qfba-wi}, yields the result.
\end{proof}

For easier reference, we explicitly state the case $(M_1, q)_S \cong (M_1, q_1)_S$ in the following Corollary~\ref{prop_odd}. The case~\ref{prop_odd-b} is obtained by writing $S/R$ as a finite tower of odd one-generated (= simple) field extensions.

\begin{cor} \label{prop_odd} Let $R$ be a semilocal ring, let $q$ and $q'$ be regular $R$--quadratic forms, and let $S\in \Ralg$ be a finite $R$--algebra of constant odd degree which satisfies one of the following conditions,
\begin{enumerate}[label={\rm (\roman*)}]
 \item \label{prop_odd-u} $S$ is a one-generated $R$--algebra, or

\item \label{prop_odd-a} $S$ is an \'etale $R$--algebra, or

\item \label{prop_odd-b} $R$ is a field and $S/R$ is a field extension. \end{enumerate}
Then \begin{equation*} \label{prop_odd-1}
       q_S \cong q'_S \iff q \cong q'.
\end{equation*}
\end{cor}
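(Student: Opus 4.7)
The backward implication is immediate by base change, so the content is the direction $q_S \cong q'_S \Rightarrow q \cong q'$. My plan is to reduce this implication to two results already established in the excerpt: Corollary~\ref{cor-springer2}, which transfers containment of a regular subform from $S$ down to $R$, and Corollary~\ref{carhyp-cor}(b), which is the semilocal cancellation principle stating that $(M,q)\perp(M',-q')\cong\HH(M)$ forces $q\cong q'$.

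The key step is to produce a hyperbolic decomposition of $q\perp(-q')$ over $R$. Starting from an isometry $q_S\cong q'_S$, Corollary~\ref{carhyp-cor}(a) applied over $S$ to $q_S\cong q'_S$ yields
\[
 \bigl(q\perp(-q')\bigr)_S \;=\; q_S\perp(-q'_S) \;\cong\; \HH(M_S) \;=\; \HH(M)_S,
\]
so the regular form $\HH(M)$ is contained in $\bigl(q\perp(-q')\bigr)_S$. Because $q$ and $-q'$ are regular, so is $q\perp(-q')$; in particular it is nonsingular and is therefore an admissible target for Corollary~\ref{cor-springer2}, which in cases (i) and (ii) produces an embedding $\HH(M)\hookrightarrow q\perp(-q')$ over $R$. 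A rank count closes the argument: since $q_S\cong q'_S$ and $S/R$ is faithfully flat in each case, $\rank M=\rank M'$, and hence the orthogonal complement of $\HH(M)$ inside $q\perp(-q')$ (which exists by \ref{qfba}(e) because $\HH(M)$ is regular) has rank $0$ and is therefore trivial. Thus $q\perp(-q')\cong\HH(M)$, and Corollary~\ref{carhyp-cor}(b) (which needs $R$ semilocal) delivers $q\cong q'$.

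Case (iii) reduces to case (i) exactly as the remark just before the statement suggests: choose generators $\alpha_1,\ldots,\alpha_n$ of $S/R$ and form the tower
\[
 R \;\subset\; R(\alpha_1) \;\subset\; R(\alpha_1,\alpha_2) \;\subset\; \cdots \;\subset\; R(\alpha_1,\ldots,\alpha_n)\;=\;S.
\]
Each step is simple, hence one-generated, and since the step-degrees multiply to the odd integer $[S:R]$, each step has odd degree; iterating case~(i) propagates the isometry down to $R$. The only real subtlety in the whole argument is verifying the hypotheses of Corollary~\ref{cor-springer2} and pinning down the hyperbolic space precisely as $\HH(M)$ (rather than $\HH$ of some other rank-$r$ module), so as to legitimately invoke Corollary~\ref{carhyp-cor}(b); both are handled by the rank comparison via faithful flatness of $S/R$, and I do not anticipate any additional technical obstacle.
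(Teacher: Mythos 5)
Your proof is correct: the hypotheses of Corollary~\ref{cor-springer2} are indeed satisfied for the ambient form $q\perp(-q')$ (regular, hence nonsingular), the rank comparison via faithful flatness of $S$ does force the orthogonal complement of the descended copy of $\HH(M)$ to vanish, and your treatment of case \ref{prop_odd-b} by a tower of simple extensions of (automatically odd) degree, iterating case \ref{prop_odd-u} over the intermediate fields, is exactly the paper's reduction. The only difference from the paper is economy of means: the paper regards Corollary~\ref{prop_odd} as nothing more than the special case of Corollary~\ref{cor-springer2} in which the containment $(M_1,q_1)_S\subset (M,q)_S$ is an isometry --- one applies \ref{cor-springer2} directly with $(M_1,q_1)=q$ and ambient form $q'$ (both regular), and the same rank count then shows the complement is zero. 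Your detour through $q\perp(-q')$, Corollary~\ref{carhyp-cor}\eqref{carhyp-cor-a} over $S$, \ref{cor-springer2} applied to $\HH(M)$, and \ref{carhyp-cor}\eqref{carhyp-cor-b} essentially re-runs the internal proof of \ref{cor-springer2}, which already passes through \ref{carhyp-cor} and Witt cancellation \ref{qfba-sing}\eqref{qfba-wi}; so you establish the same statement with one extra layer of the doubling argument. Nothing is lost mathematically, but the direct application is shorter and is what the authors intend.
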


In Corollary~\ref{cor-spri-witt}, $\hWq(R)$ and $\Wq(R)$ denote the Witt-Grothen\-dieck ring and Witt ring of regular quadratic $R$--modules respectively, as for example defined in \cite[I, \S2, \S4]{Ba}. These are commutative associative rings without a multiplicative identity if $2\notin R\ti$. The proof of \ref{cor-spri-witt} is standard, using Corollary~\ref{cor-springer2}.

\begin{cor}  \label{cor-spri-witt}
Let $R$ be a semilocal ring and let $S\in \Ralg$ be a finite $R$--algebra of constant odd degree, which is one-generated or \'etale. Then the maps
\begin{equation}\label{cor-spri-witt1}
 \hWq(R) \longto \hWq(S) \quad \text{and} \quad \Wq(R) \longto \Wq(S),
 \end{equation}
induced by $[q]\mapsto [q_S]$, are monomorphisms.
\end{cor}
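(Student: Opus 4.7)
The plan is to derive both injectivity statements from Corollary~\ref{prop_odd}. First observe that $S$, being a finite algebra over the semilocal ring $R$, is itself semilocal, so Witt cancellation \ref{qfba-sing}\eqref{qfba-wi} is available on both sides.

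For $\hWq(R) \to \hWq(S)$: Witt cancellation makes the commutative monoid of isometry classes of regular quadratic modules under $\perp$ cancellative, so $\hWq$ is the Grothendieck group of this cancellative monoid, and consequently $[q_1] - [q_2]$ vanishes in $\hWq(R)$ iff $q_1 \cong q_2$ (and similarly over $S$). Thus, if $[q_{1,S}] = [q_{2,S}]$ in $\hWq(S)$, then $q_{1,S} \cong q_{2,S}$, and Corollary~\ref{prop_odd} yields $q_1 \cong q_2$, whence $[q_1] = [q_2]$ in $\hWq(R)$.

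For $\Wq(R) \to \Wq(S)$, I would first record a Witt decomposition over semilocal rings: iterating the hyperbolic splitting of \ref{isotrop}\eqref{isotrop-c} (a direct consequence of Proposition~\ref{lift-qf}), every regular quadratic module $q$ over $R$ admits a decomposition $q \cong q_{an} \perp \HH(R)^n$ in which $q_{an}$ contains no isotropic vector. Witt cancellation forces $n$ and the isometry class of $q_{an}$ to be uniquely determined by $q$. Theorem~\ref{thm_springer} (Springer) then shows that $(q_{an})_S$ remains anisotropic, so by uniqueness it coincides with the anisotropic part of $q_S$. Consequently, $[q_1] = [q_2]$ in $\Wq(R)$ is equivalent to $q_{1,an} \cong q_{2,an}$, and similarly over $S$.

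If now $[q_{1,S}] = [q_{2,S}]$ in $\Wq(S)$, the equivalence applied over $S$ gives $(q_{1,an})_S \cong (q_{2,an})_S$, and Corollary~\ref{prop_odd} then produces $q_{1,an} \cong q_{2,an}$; hence $[q_1] = [q_2]$ in $\Wq(R)$. The main obstacle is installing the Witt decomposition over a general semilocal ring and verifying, via Springer's Theorem, that the anisotropic part is compatible with base change from $R$ to $S$; once this is in hand, both monomorphism claims follow directly from Corollary~\ref{prop_odd}.
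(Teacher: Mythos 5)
Your argument is correct. The paper does not write out a proof, saying only that it is ``standard, using Corollary~\ref{cor-springer2}''; your treatment of $\hWq(R)\to\hWq(S)$ is exactly that standard argument: since $S$ is again semilocal, Witt cancellation \ref{qfba-sing}\eqref{qfba-wi} turns the Grothendieck-group relation $[q_{1,S}]=[q_{2,S}]$ into an isometry $q_{1,S}\cong q_{2,S}$, and Corollary~\ref{prop_odd} (which the paper states precisely as the special case of Corollary~\ref{cor-springer2} needed here) descends it to $R$. For $\Wq$ you take a somewhat heavier route than the most direct one: the shortest standard argument picks a representative $q$ of a kernel class, notes that $q_S$ is stably hyperbolic and hence hyperbolic by cancellation over $S$, and then descends hyperbolicity to $R$ by Corollary~\ref{cor-springer} (equivalently, Corollary~\ref{cor-springer2} applied to a hyperbolic subspace of full rank), so that $[q]=0$; you instead install the Witt decomposition $q\cong q_{an}\perp\HH(R)^n$ over semilocal rings and use Theorem~\ref{thm_springer} to see that anisotropic kernels are preserved under the odd-degree extension. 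Both routes work; yours needs the extra (standard, cancellation-based) facts that the anisotropic kernel is unique, that classes of hyperbolic spaces $\HH(P)$ with $P$ projective reduce to free hyperbolic summands after the rank decomposition \ref{qfba-sing}\eqref{qfba-red}, and that the cancellation statement is applied only to forms of positive rank (rank-zero components being handled componentwise) --- but in exchange it yields the stronger conclusion that the anisotropic kernel of $q_S$ is $(q_{an})_S$ and exhibits injectivity on $\Wq$ classwise rather than only on the kernel.
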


Corollary~\ref{cor-spri-witt} was established in \cite[V, Thm.6.9]{Ba} for Frobenius extensions, based on a detailed study of torsion in the kernels of the maps in \eqref{cor-spri-witt1} and in this way avoiding Springer's Theorem, which is not proven in \cite{Ba}.

\subsection{Set of values} \label{sov}
By definition, the {\em set of values $D(q)$\/} of a quadratic module $(M,q)$ is
\[ \rmD(q) = q(M) \cap R\ti. \]
We will use the following elementary facts. \ms

\begin{inparaenum}[(a)] \item\label{sov-a} The set of values is stable under base change: if $S\in \Ralg$, then $\rmD(q) \ot 1_S \subset \rmD(q_S)$. \sm

\item \label{sov-c} If $q$ contains a hyperbolic plane, then $\rmD(q) = R\ti$.  Recall from \ref{isotrop}\eqref{isotrop-c} that $q$ contains a hyperbolic plane whenever $(M,q)$ is an isotropic  quadratic space and $R$ is semilocal or $q$ is regular. \sm

\item\label{sov-d} ({\em Direct products}) Let $R=R_1 \times R_2$ be a direct product. By \ref{qfba-sing}\eqref{qfba-red}, the quadratic module $(M,q)$ uniquely decomposes as $(M,q) = (M_1, q_1) \times (M_2, q_2)$ where $(M_i, q_i)$, $i=1,2$, is a quadratic $R_i$--module, which is nonsingular if $(M,q)$ is so. We have     $\rmD(q) = \rmD(q_1) \times \rmD(q_2)$.
    An $S\in \Ralg$ which is projective of rank $d\in \NN_+$ uniquely decomposes as $S=S_1 \times S_2$ where each $S_i$ is a projective $R_i$--module of rank $d$.
    In view of \ref{qfba-sing}\eqref{qfba-red}, this shows that the determination of $\rmD(q)$ can often be reduced to that of $\rmD(q)$ where $(M,q)$ has constant rank. \sm

\item \label{sov-e}Let $(M,q) = (R, \lan u \ran)$ with $u\in R\ti$. Then $\rmD(q) = u R\ti{}^2$.
For any $S\in \Ralg$ which is projective of constant odd rank $d$ we have
\begin{equation}\label{solv-e1}
  a\ot 1_S \in  \rmD(q_S) \quad \iff \quad a \in \rmD(q).
  \end{equation}
By \eqref{sov-a} we only need to prove ``$\implies$''. We have $\rmD(q_S) = (u \ot 1_S )\, S\ti{}^2$, and if $a\ot 1_S = (u \ot 1_S)s^2$ for some $s\in S$, then $a^d = \rmN_{S/R}(a\ot 1_S) = u^d\, \rmN_{S/R}(s)^2$. Since $a^d \in a R\ti{}^2$ and $u^d \in u R\ti{}^2$, we get $a\in uR\ti{}^2 = \rmD(q)$.
\sm

In the following Corollary~\ref{cor-spri-rep} we will prove \eqref{solv-e1} for more general nonsingular forms.
\end{inparaenum}

\begin{cor} \label{cor-spri-rep} Let $R$ be a semilocal ring, let $a\in R\ti$ and let $(M,q)$ be a quadratic space for which $q'=q \perp \lan -a \ran $ is nonsingular, cf.\ {\rm \ref{qfba-sing}\eqref{qfba-sing-vii}}. Furthermore, let $S\in \Ralg$ be a finite $R$--algebra of constant odd degree which is \'etale or one-generated. Then
\begin{equation}\label{cor-spri-rep1} a\ot 1_S \in  \rmD(q_S) \quad \iff \quad a \in \rmD(q).
  \end{equation}
\end{cor}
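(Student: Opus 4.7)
The $\Leftarrow$ direction is immediate from base change~\ref{sov}\eqref{sov-a}. For the converse, set $M' = M \oplus R$ and consider the nonsingular quadratic space $(M', q')$ with $q' = q \perp \lan -a \ran$. Given a unimodular $m \in M_S$ with $q_S(m) = a \ot 1_S$, the vector $(m, 1_S) \in M'_S$ is unimodular (its second coordinate is the unit $1_S$) and isotropic for $q'_S$, since $q'_S(m, 1_S) = q_S(m) - a \ot 1_S = 0$. So $q'_S$ is isotropic, and Theorem~\ref{thm_springer} applied to the nonsingular form $q'$ yields a unimodular $(m_0, r_0) \in M \oplus R$ with $q(m_0) = a\, r_0^2$.

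If $r_0 \in R\ti$, then $q(r_0\me m_0) = a$ and hence $a \in \rmD(q)$. Otherwise, I would argue residue field by residue field. At a maximal ideal $\m$ of $R$ with $r_0 \notin \m$, the reduction $m_0(\m)/r_0(\m)$ represents $a(\m)$ in $q_{\ka(\m)}$. At a maximal ideal $\m$ with $r_0 \in \m$, unimodularity of $(m_0, r_0)$ forces $m_0(\m) \ne 0$ while $q_{\ka(\m)}(m_0(\m)) = 0$, exhibiting $q_{\ka(\m)}$ as an isotropic nonsingular quadratic space over the field $\ka(\m)$; by \ref{sov}\eqref{sov-c} we then have $\rmD(q_{\ka(\m)}) = \ka(\m)\ti$, so $q_{\ka(\m)}$ represents $a(\m)$. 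Thus at every maximal ideal $\m$, the form $q_{\ka(\m)}$ represents $a(\m)$ by a unimodular vector.

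These local representations should assemble, via Lemma~\ref{lift-mod} and the projectivity of $M$, into a unimodular $n \in M$ with $q(n) \equiv a \pmod{\Jac(R)}$. A final Hensel-type adjustment then produces $n' \in M$ with $q(n') = a$ exactly: in residue characteristic not $2$ one can simply scale $n$ by a square root of $a/q(n) \in 1 + \Jac(R)$ (which exists by Hensel, since the derivative of $X^2 - (1+j)$ is a unit at $X=1$); in residue characteristic $2$ one must instead perturb $n$ additively by an element of $\Jac(R)\cdot M$, which remains feasible because nonsingularity of $q$ allows $n$ to be chosen so that $b_q(n, \cdot) \co M \to R$ hits a unit. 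The main obstacle is this last lifting step, with the residue characteristic $2$ case being the most delicate; it is here that the nonsingularity hypotheses on both $q$ and $q'$ play their essential role.
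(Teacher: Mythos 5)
Your first half coincides with the paper's argument: reduce via Theorem~\ref{thm_springer} applied to $q'=q\perp\lan -a\ran$ to an isotropic unimodular $(m_0,r_0)\in M\oplus R$ with $q(m_0)=ar_0^2$, and dispose of the cases $r_0\in R\ti$ and (over a field) $r_0=0$. The gap is the assembly step at the end. Lemma~\ref{lift-mod} only lifts subspaces in general position and gives no control whatsoever on the values of $q$, and the ``Hensel-type adjustment'' is not available: a semilocal ring is not henselian, so $1+\Jac(R)$ need not consist of squares (in $R=\ZZ_{(5)}$ the element $6=1+5$ is a unit congruent to $1$ mod $\Jac(R)$ but has no square root in $R$). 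In fact the principle you are implicitly invoking --- that a unit represented by $q$ modulo $\Jac(R)$ is represented by $q$ over $R$ --- is false: over $R=\ZZ_{(5)}$ the regular form $q=\lan 1\ran_q\perp\lan 1\ran_q$ represents $6$ modulo $5\ZZ_{(5)}$, but not over $R$, since $6$ is not a sum of two rational squares. The characteristic~$2$ variant fares no better: choosing $m'$ with $b_q(n,m')\in R\ti$ reduces you to solving $b_q(n,x)+q(x)=a-q(n)$ exactly in $x\in M$, which is just the original representation problem again; without henselianness the Newton-type correction only works modulo higher powers of $\Jac(R)$ and never terminates. So residue-field representations of $a$ by $q$ cannot be glued by elementary lifting; one must use the global information that $q'$ is isotropic over $R$ itself.

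This is exactly how the paper closes the argument, and it is the ingredient your sketch is missing. Rather than lifting representations of $a$ by $q$, the paper lifts isotropic vectors of $q'$: for each maximal ideal $\m$ with residue field $\ka$, the field case (applied to $q_\ka$ and the odd-degree algebra $S\ot_R\ka$) gives $m_\ka\in M_\ka$ with $q_\ka(m_\ka)=a\ot 1_\ka$, hence isotropic vectors $v_\m=(m_\ka,1_\ka)$ of $q'_\ka$; Corollary~\ref{cor_qquadric} --- i.e.\ the surjectivity of $\uQ(R)\to\prod_\m\uQ(\ka(\m))$ for the projective quadric $\uQ$ of $q'$, which rests on Demazure's Conjugacy Theorem and is valid precisely because $\uQ(R)\ne\emptyset$ by Theorem~\ref{thm_springer} --- produces an isotropic $v=(x,r)\in M\oplus R$ lifting the $v_\m$. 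Then $r\equiv 1$ modulo every maximal ideal, so $r\in R\ti$ and $a=q(r\me x)$. If you want to complete your proof, replace your last paragraph by this application of Corollary~\ref{cor_qquadric} to $q'$ and the family $(m_\ka,1_\ka)$; the homogeneous-space structure of the quadric is what substitutes for the missing Hensel lemma. (You also skipped the harmless preliminary reduction to constant rank $\ge 2$ via \ref{sov}\eqref{sov-d} and \ref{sov}\eqref{sov-e}.)
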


This is a well-known result in case $R$ is a field of characteristic $\ne 2$ and $S/R$ is an extension field, see for example \cite[VII, Cor.~2.9]{Lam-qf}. In this case, no assumption on $q'$ is necessary.

\begin{proof}
  We will of course only prove ``$\implies$''. By \ref{sov}\eqref{sov-d} and \ref{sov}\eqref{sov-e} we can assume that $M$ has constant rank $\ge 2$. The assumption implies that $q'_S$ contains an isotropic vector $(x,1)$ for some $x\in M_S$. Hence, by Theorem~\ref{thm_springer} for $q'$, we get that $q'$ is $R$--isotropic, i.e., there exists $m\in M$ and $r\in R$ such that $(m,r) \in M \oplus R$ is unimodular and satisfies $q(m)= a r^2$. At this point, two cases are clear:

\begin{enumerate}[label={\rm (\roman*)}]
   \item $r=0$: Then $m\in M$ is an isotropic vector of $q$. Hence, by \ref{isotrop}\eqref{isotrop-c}, $(M,q)$ contains a hyperbolic plane and then we are done by \ref{sov}\eqref{sov-c}.

   \item \label{cor-spri-rep-ii} $r\in R\ti$: Then $a=q(r\me m)$ and we are again done.
  \end{enumerate}

\noindent In particular, \eqref{cor-spri-rep1} holds in case $R$ is a field. For general $R$ we will choose an isotropic vector of $q'$ more carefully.

Let $\m \ideal R$ be a maximal ideal of $R$ with residue field $\ka = R/\m$, and put $S\ot_R \ka = S /\m S$. We thus have extensions
\[ \vcenter{\xymatrix@C=40pt{S \ar[r] & S\ot_R \ka \\ R \ar[u] \ar[r] & \ka \ar[u]}
} \; .\]
Since nonsingularity is inherited by extensions and since $(S\ot_R \ka)/\ka$ has odd degree and is \'etale or one-generated, the field case applies and yields the existence of $m_\ka \in M\ot_R \ka$ satisfying $q_\ka(m_\ka) = a\ot 1_\ka$. In this way we obtain a family $(v_\m)_{\m\in \Specmax(R)}$ of isotropic vectors $v_\m = (m_\ka, 1_\ka) \in (M \oplus R)_\ka$. By Corollary~\ref{cor_qquadric}, we then get an isotropic element $v=(x,r)\in M \oplus R$ that lifts the $v_\m$'s. In particular, $r_\ka = 1_\ka$ for every residue field $\ka$ of $R$. This implies $r\in R\ti$. So we are done by \ref{cor-spri-rep-ii} above. \end{proof}

\appendix
\section{Lifting isotropic elements}\label{app:lifting}

The goal of this appendix is Corollary~\ref{cor_qquadric}, which gives a criterion for lifting isotropic elements from localizations. We obtain this as a consequence of a surjectivity result for localizations of quadrics (Proposition~\ref{prop_qquadric}), which in turn is a special case of Demazure's fundamental Conjugacy Theorem, proven in \cite[XXVI]{SGA3} and re-stated below.

If $C\in \Ralg$ and $\m \ideal R$ is a maximal ideal of $R$ we abbreviate $C/\m := C/\m C$. We use the terms {\em reductive (semisimple) group scheme\/} and {\em parabolic subgroup scheme\/} as defined in \cite[Tome III]{SGA3}, but refer to a group scheme over $\Spec(R)$ as an $R$--group scheme. We furthermore use the setting and the results of \cite[XXVI.3]{SGA3} for schemes of parabolic subgroup schemes.

\begin{thm}[\bf Demazure's Conjugacy Theorem] \label{thm_conj_demazure}
Let $R$ be a semilocal ring and let $\uG$ be a reductive $R$-group scheme. Denote by $\Dyn(\uG)$ its Dynkin $R$-scheme (which is finite \'etale) and by $\Of(\Dyn(\uG))$ the $R$-scheme of clopen subsets of $\Dyn(\uG)$. Let $t \in \Of(\Dyn(\uG))(R)$ be a type of parabolic subgroups and denote by  $\uX= \mathrm{Par}(\uG)_t$ the $R$-scheme
of parabolic subgroups of type $t$. Then the following hold.
\sm
\begin{enumerate}[label={\rm (\alph*)}]

\item \label{thm_conj_demazure-a} $\uG(R)$ acts transitively on $\uX(R)$.
\sm

\item \label{thm_conj_demazure-b} If $S$ is a
finite $R$-algebra such that $\uX(S) \not = \emptyset$, the map
\[ \uX(S) \longto \prod\limits_{\m \in \Specmax(S)} \uX(S/ \m) \]
is onto.
\end{enumerate}
\end{thm}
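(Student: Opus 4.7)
My plan is to invoke part \ref{thm_conj_demazure-a} essentially as stated in \cite[Exp.~XXVI]{SGA3} and then to bootstrap from it to part \ref{thm_conj_demazure-b} through a smooth-lifting argument. For \ref{thm_conj_demazure-a}, the SGA3 proof exploits that $\uX = \mathrm{Par}(\uG)_t$ is smooth and projective over $R$: one reduces, via infinitesimal lifting and descent through \'etale neighborhoods, to the classical conjugacy of parabolic subgroups of a fixed type over an algebraically closed field. For the purposes of this paper I would simply cite \cite[Exp.~XXVI, Cor.~5.5]{SGA3} for \ref{thm_conj_demazure-a}.

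For \ref{thm_conj_demazure-b}, I would proceed as follows. First note that since $S$ is a finite algebra over the semilocal ring $R$, the ring $S$ is itself semilocal, and the canonical map $S/\Jac(S) \simlgr \prod_{\m \in \Specmax(S)} S/\m$ is an isomorphism. Fix $P \in \uX(S)$ (non-empty by hypothesis) and a target family $(Q_\m)_\m \in \prod_\m \uX(S/\m)$. Applying \ref{thm_conj_demazure-a} to each reductive $S/\m$-group scheme $\uG_{S/\m}$ and to the fibres $P_{S/\m}$ and $Q_\m$ (which carry the same type as $P$), one obtains elements $g_\m \in \uG(S/\m)$ with $g_\m \cdot P_{S/\m} = Q_\m$. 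Assembling them yields $\bar g \in \uG(S/\Jac(S))$. If $\bar g$ can be lifted to some $g \in \uG(S)$, then $g \cdot P \in \uX(S)$ is a preimage of $(Q_\m)_\m$ and we are done.

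The only non-trivial step is the lifting of $\bar g$ from $S/\Jac(S)$ to $S$, the obstacle being that $S$ need not be Henselian so that Hensel's lemma does not apply directly. Here I would use that $\uG$ is reductive, hence smooth and affine over $S$, together with the general fact that for any smooth affine $S$-scheme $Y$ over a semilocal ring $S$, the reduction map $Y(S) \to Y(S/\Jac(S))$ is surjective: one covers $Y$ by \'etale neighborhoods realising preimages of $\bar g$ locally, and observes that any \'etale cover of a semilocal ring admits a section, cf.\ \cite[Exp.~XI]{SGA3}. Applying this principle to $Y = \uG$ produces the desired $g \in \uG(S)$, completing the reduction to \ref{thm_conj_demazure-a}.
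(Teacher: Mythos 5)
Your reduction of \ref{thm_conj_demazure-b} to \ref{thm_conj_demazure-a} over the residue fields is a reasonable first move, but the lifting step is a genuine gap. The ``general fact'' you invoke --- that for any smooth affine scheme $Y$ over a semilocal ring $S$ the reduction map $Y(S)\to Y(S/\Jac(S))$ is surjective --- is false, and so is its justification (``any \'etale cover of a semilocal ring admits a section''): a separable quadratic field extension is an \'etale cover of a field (a semilocal ring) with no section, and for $S=\ZZ_{(7)}$ the scheme $Y=\Spec\big(S[x]/(x^2-2)\big)$ is \'etale over $S$, has an $\FF_7$-point (since $3^2\equiv 2$), but no $S$-point because $2$ is not a square in $\QQ$; here $S/\Jac(S)=\FF_7$. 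Smoothness gives lifting of points only along nilpotent thickenings or henselian pairs, and $\Jac(S)$ is in general neither nilpotent nor is $(S,\Jac(S))$ henselian; even the special case of surjectivity of $\uG(S)\to\uG(S/\Jac(S))$ for reductive $\uG$ over semilocal $S$ is not a quotable general fact. So your $\bar g$ cannot be lifted by the argument you sketch.

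The paper's proof is designed exactly to avoid lifting a $\uG$-point. Fixing a parabolic $\uQ\in\uX(S)$, it takes an opposite parabolic $\uQ'$ (which exists over the semilocal ring $S$ by \cite[XXVI.4.3.5]{SGA3}) and invokes the big-cell statement \cite[XXVI.5.2]{SGA3}: over any semilocal base, the orbit map $\rad_u(\uQ)\times\rad_u(\uQ')\to\uX$ is surjective on rational points. Applying this over $S$ and over $S/\Jac(R)S\cong\prod_\m S/\m$, the lifting problem is transferred from $\uX$ (equivalently from $\uG$) to the unipotent radicals, which are vector group schemes \cite[XXVI.2.5]{SGA3}; their points lift trivially along the surjection $S\to\prod_\m S/\m$, and a diagram chase concludes. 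If you wish to keep your outline, replace ``lift $\bar g\in\uG(S/\Jac(S))$'' by ``lift the unipotent parameters expressing the targets $(Q_\m)_\m$ in the big cell attached to $(\uQ,\uQ')$''; that is precisely the paper's argument. (For \ref{thm_conj_demazure-a}, deferring to the semilocal conjugacy results of \cite[XXVI]{SGA3} is fine and is essentially what the paper does, via XXVI.4.3.5 and XXVI.5.2.)
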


\begin{proof} \ref{thm_conj_demazure-a}
If $\uX(R)=\emptyset$, the statement is obvious. We can thus assume that $\uX(R) \ne \emptyset$ and pick a point $x \in \uX(R)$; this corresponds  to an $R$-parabolic subgroup $\uP$ of $\uG$ of type $t$. According to \cite[XXVI.4.3.5]{SGA3},
$\uG$ admits a parabolic subgroup $\uP'$ opposite to $\uP$.
Corollary 5.2.\ of {\em loc.\ cit.} then says in particular that $\uX(R)=\uG(R)/\uP(R)$. Thus $\uG(R)$ acts transitively on $\uX(R)$.%
\sm

\ref{thm_conj_demazure-b} Recall that  $S$ is semilocal, for example by \cite[VI, (1.1.1)]{K}. Our assumption is that $\uG_S$ admits an $S$-parabolic subgroup scheme $\uQ$ of type $t$. As noted in \ref{thm_conj_demazure-a}, it admits an opposite  parabolic $S$-subgroup $\uQ'$.
According to  \cite[XXVI.5.2]{SGA3}, the product
$\rad_u(\uQ)(S)  \times \rad_u(\uQ')(S) \to \uX(S)$
is surjective (here $\rad_u(.)$ denotes the unipotent radical).
Applying this for the semilocal ring $S$ as well as
for the semilocal ring $S/\m$, $\m \in \Specmax(R)$,
shows that the horizontal maps in the
commutative diagram below are surjective:
\[ \xymatrix{
    \rad_u(\uQ)(S)  \times \rad_u(\uQ')(S)  \ar[r] \ar[d]
    &  \uX(S) \ar[d] \\
     \prod\limits_{\m} \rad_u(\uQ)(S/\m)  \times \rad_u(\uQ')(S/\m)
     \ar[r]
    &  \prod\limits_{\m} \uX(S/\m)
} \]
Since $\prod_\m S/\m \cong \prod_\m S \ot_R (R/\m) \cong S \ot_R \big(R/\Jac(R)\big) \cong S/\Jac(R)S$, where $\Jac(R)$ denotes the Jacobson radical of $R$, the map $S \to \prod_{\m} S/\m$ is onto. On the other hand, the $S$--scheme $\rad_u(\uQ)$ (respectively $\rad_u(\uQ')$) is isomorphic to a vector $S$--group scheme \cite[XXVI.2.5]{SGA3}, so that  the left vertical map is onto. Hence,
by a simple diagram chase the right vertical map  is onto too.
\end{proof}

\begin{cor}\label{cor_conj_demazure}
  We use the notation of {\rm \ref{thm_conj_demazure}}, except that $R$ need not be semilocal, but can be arbitrary. In particular, $\uG$ is a reductive $R$--group scheme and $x$, $y$ are parabolic subgroups in $\uX(R)$.

  Then  there exist $f_1,\ldots, f_n \in R$ satisfying $f_1+\cdots+ f_n= 1$ and $y_{R_{f_i}} \in \uG(R_{f_i}) \, . \, x_{R_{f_i}}$ for $i=1,..,n$.
In other words, $x$ and $y$ are locally $\uG$-conjugated for the Zariski topology on $R$.
\end{cor}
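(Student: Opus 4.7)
\textbf{Proof proposal for Corollary~\ref{cor_conj_demazure}.}
The strategy is to work pointwise on $\Spec(R)$ and then spread out. For each $\p \in \Spec(R)$, the local ring $R_\p$ is in particular semilocal, so Theorem~\ref{thm_conj_demazure}\ref{thm_conj_demazure-a}, applied to the reductive $R_\p$-group scheme $\uG_{R_\p}$ and the parabolic subgroups $x_{R_\p}, y_{R_\p} \in \uX(R_\p)$, yields an element $g_\p \in \uG(R_\p)$ with $g_\p \cdot x_{R_\p} = y_{R_\p}$. The task is to promote this pointwise statement to a Zariski-local one.

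For the spreading-out step, recall that $\uG$ is affine and of finite presentation over $R$, so its functor of points commutes with filtered colimits of $R$-algebras. Since $R_\p = \varinjlim_{f \notin \p} R_f$, the element $g_\p$ is the image of some $g \in \uG(R_f)$ for a suitable $f \in R \setminus \p$. The two resulting $R_f$-points $g \cdot x_{R_f}$ and $y_{R_f}$ of $\uX$ agree after base change to $R_\p$. Because $\uX$ is smooth and projective, hence separated and of finite presentation, their equalizer is a closed subscheme of $\Spec(R_f)$ cut out by a finitely generated ideal $I \subset R_f$; then $IR_\p = 0$ forces $If' = 0$ in $R_f$ for some $f' \in R_f$ whose image in $\ka(\p)$ is nonzero. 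Replacing $f$ by a suitable product, we obtain $f \in R \setminus \p$ such that $y_{R_f} \in \uG(R_f) \cdot x_{R_f}$.

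As $\p$ varies over $\Spec(R)$, the principal opens $D(f)$ produced above form an open cover of $\Spec(R)$. By quasi-compactness, finitely many $D(f_1), \ldots, D(f_n)$ already suffice, whence $(f_1, \ldots, f_n) = R$. Writing $1 = a_1 f_1 + \cdots + a_n f_n$ and replacing each $f_i$ by $a_i f_i$ (which only shrinks $D(f_i)$, so the conjugacy relation persists under the further localization $R_{f_i} \to R_{a_i f_i}$), we arrive at elements $f_1, \ldots, f_n \in R$ with $f_1 + \cdots + f_n = 1$ and $y_{R_{f_i}} \in \uG(R_{f_i}) \cdot x_{R_{f_i}}$ for every $i$, which is the claim.

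The only real obstacle is the finite-presentation bookkeeping needed to spread out both the conjugating element and the orbit relation; this is standard given that $\uG$ and $\uX$ are of finite presentation over $R$ and $\uX$ is separated, and the rest is pure quasi-compactness of $\Spec(R)$.
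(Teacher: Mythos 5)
Your proof is correct, and it takes a genuinely different route through the spreading-out step than the paper does. Both arguments share the same skeleton: apply Theorem~\ref{thm_conj_demazure}\ref{thm_conj_demazure-a} over the local rings $R_\p$ (the paper works with $R_\m$ for maximal $\m$) to get pointwise conjugacy, then spread out using finite presentation, and finish by quasi-compactness of $\Spec(R)$. The difference lies in how the spreading-out is organized. The paper packages it via the strict transporter $\uT$, $\uT(S)=\{g\in\uG(S): g\cdot x_S=y_S\}$, citing SGA3 (subgroups of type (RR) and (R), and XXII.5.3.9) to know that $\uT$ is a finitely presented affine $R$--scheme, and then applies the limit argument exactly once, in the form of Lemma~\ref{fip}, to $\uT$. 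You instead spread out the conjugating element $g_\p$ using only that $\uG$ is affine and of finite presentation, and spread out the orbit relation $g\cdot x=y$ using that $\uX=\mathrm{Par}(\uG)_t$ is separated and of finite presentation (so the locus where two sections agree is cut out by a finitely generated ideal), which is precisely your equalizer/annihilator argument. In effect you re-prove by hand the special case of the transporter's finite presentation that is needed, so your argument is more elementary and self-contained, avoiding the SGA3 transporter machinery; the paper's version is shorter given the citation and records the stronger fact that the conjugacy locus is actually representable by a finitely presented affine scheme, which is what makes the clean appeal to Lemma~\ref{fip} possible. Your bookkeeping at the two delicate points (replacing $f'\in R_f$ by an element of $R\setminus\p$, and noting that the final normalization $f_i\mapsto a_if_i$ with $\sum_i a_if_i=1$ only localizes further, so the conjugacy relation persists) is handled correctly.
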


\begin{proof} The group $\uG$ is an $R$--group of type (RR) by \cite[XXII.5.1.3]{SGA3} and the points $x$, $y$ of $\uX(R)$ are parabolic subgroups, hence subgroups of type (R) by 5.2.3 of {\em loc.\ cit.}. It then follows from Theorem~5.3.9 of {\em loc.\ cit.} that the strict transporter $\uT$, defined by
\[ \uT(S) = \{ g\in \uG(S) : g \cdot x_S = y_S\} \quad (S\in \Ralg), \]
is a finitely presented affine $R$--scheme (among other properties). Since $\uT(R_\m) \ne \emptyset$ for any maximal $\m \in \Spec(R)$ by \ref{thm_conj_demazure}\ref{thm_conj_demazure-a}, the claim follows from Lemma~\ref{fip} below. \end{proof}

\begin{lem}\label{fip} Let $R$ be arbitrary and let $\uT$ be an $R$--scheme which is locally of finite presentation. If\/ $\uT(R_\m) \ne \emptyset$ for all maximal $\m \in \Spec(R)$, then there exists a Zariski cover $(f_1, \ldots, f_n)$ of $R$ for which $\uT(R_{f_i}) \ne \emptyset$ for all $i$, $1 \le i \le n$.
\end{lem}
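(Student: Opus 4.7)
The plan is to exploit the defining property of schemes locally of finite presentation, namely that they commute with filtered colimits of rings. First I would fix a maximal ideal $\m \ideal R$ and write $R_\m$ as the filtered colimit $R_\m = \varinjlim_{f \notin \m} R_f$ of the localizations $R_f$ at single elements $f \in R \setminus \m$. Since $\uT$ is locally of finite presentation, one has
\[ \uT(R_\m) \;=\; \varinjlim_{f \notin \m} \uT(R_f). \]
By hypothesis $\uT(R_\m) \ne \emptyset$, so picking a point of $\uT(R_\m)$ and using the colimit description above yields some $f = f_\m \in R\setminus \m$ such that $\uT(R_{f_\m}) \ne \emptyset$.

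Next I would assemble these elements into a cover. By construction, each $f_\m$ avoids $\m$, so the ideal $I = \sum_{\m} f_\m R$ of $R$ is not contained in any maximal ideal of $R$; hence $I = R$. Therefore $1 \in R$ can be written as a finite $R$-linear combination
\[ 1 \;=\; r_1 f_{\m_1} + \cdots + r_n f_{\m_n} \]
for some maximal ideals $\m_1, \dots, \m_n$ and some $r_i \in R$. Setting $f_i := f_{\m_i}$, this shows that $(f_1, \dots, f_n)$ is a Zariski cover of $\Spec(R)$, and by the choice of each $f_i$ we have $\uT(R_{f_i}) \ne \emptyset$.

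I do not anticipate any real obstacle: the only point one must check carefully is the commutation of $\uT$ with the filtered colimit $\varinjlim R_f$, which is the very definition of ``locally of finite presentation'' (see \cite[IV$_3$, 8.14.2]{SGA3} or \cite[EGA IV, 8.14.2]{}, applied to each affine open of $\uT$, noting that any morphism $\Spec(R_\m) \to \uT$ factors through some quasi-compact open subscheme of $\uT$ because $\Spec(R_\m)$ is quasi-compact). Once this step is granted, the remaining argument is just the standard ``an ideal meeting every maximal ideal equals the whole ring, and $1$ is a finite sum'' manoeuvre.
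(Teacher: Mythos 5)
Your proposal is correct and follows essentially the same route as the paper: write $R_\m=\varinjlim_{f\notin\m}R_f$, use that $\uT$ locally of finite presentation commutes with this filtered colimit (the paper cites \cite[Tag 01ZC]{Stacks} for exactly this, including the quasi-compactness point you mention) to produce $f_\m$ with $\uT(R_{f_\m})\ne\emptyset$, and then conclude since the $f_\m$ generate the unit ideal. No gaps.
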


\begin{proof} Fix a maximal $\m\in \Spec(R)$. Then $R_\m = \limind_{f \not \in \m} R_f$, hence $\Spec(R) = \limproj_{f\not\in \m} \Spec(R_f)$ and so $
\uT(R_\m)= \limind_{f \not \in \m} \uT(R_f)$,
according to \cite[Tag 01ZC]{Stacks}. 
It follows that there exists $f_\m \in R \setminus \m $ such that $\uT(R_{f_\m}) \not=\emptyset$. The $f_\m$'s for $\m$ running over the maximal ideals of $R$ generate $R$ as ideal. Hence, there exist finitely many maximal ideals $\m_1, \dots, \m_n$ such that $R=R f_{\m_1}+ \dots +  R f_{\m_n}$. \end{proof}

\subsection{}\label{prop_qquadric-not} An important special case of Theorem~\ref{thm_conj_demazure} and Corollary~\ref{cor_conj_demazure} is that of quadrics elaborated below. Let us explain some notation. We let $(M,q)$ be a quadratic space with $M$ of constant rank $n$. The associated special orthogonal group scheme $\uG=\uSO(q)$ is defined in \cite[C.2.10]{Co1}. It is a semisimple $R$--group scheme of type $\rmA_1$ for $n=3$, of type $\rmB_{(n-1)/2}$ for odd $n\ge 5$ and of type $\rmD_{n/2}$ for even $n\ge 4$. We use Bourbaki's enumeration of the corresponding Dynkin diagrams:
\begin{align*} 
&\begin{picture}(200,20)
\put(30,0){\circle*{3}}
\put(25,8){$\alpha_1$}
\end{picture} 
\\ 
&\begin{picture}(200,20)
\put(30,00){\line(1,0){20}}
\put(60,00){\dots}
\put(80,00){\line(1,0){40}}
\put(130,00){\dots}
\put(150,00){\line(1,0){20}}
\put(170,1.1){\line(1,0){20}}
\put(170,-1.2){\line(1,0){20}}
\put(176,-2.5){$>$}
\put(30,0){\circle*{3}}
\put(50,0){\circle*{3}}
\put(80,0){\circle*{3}}
\put(100,0){\circle*{3}}
\put(120,0){\circle*{3}}
\put(150,0){\circle*{3}}
\put(170,0){\circle*{3}}
\put(190,0){\circle*{3}}
\put(25,8){$\alpha_1$}
\put(45,8){$\alpha_2$}
\put(95,8){}
\put(160,8){}
\put(185,8){$\alpha_{\frac{n-1}{2}}$}
\end{picture}
\\
& \;\; \qquad \begin{picture}(250,20)
\put(00,00){\line(1,0){20}}
\put(30,00){\dots}
\put(50,00){\line(1,0){40}}
\put(100,00){\dots}
\put(120,00){\line(1,0){40}}
\put(170,00){\dots}
\put(190,00){\line(1,0){20}}
\put(210,0){\line(2,-1){20}}
\put(210,0){\line(2,1){20}}
\put(0,0){\circle*{3}}
\put(20,0){\circle*{3}}
\put(50,0){\circle*{3}}
\put(70,0){\circle*{3}}
\put(90,0){\circle*{3}}
\put(120,0){\circle*{3}}
\put(140,0){\circle*{3}}
\put(160,0){\circle*{3}}
\put(190,0){\circle*{3}}
\put(210,0){\circle*{3}}
\put(230,10){\circle*{3}}
\put(230,-10){\circle*{3}}
\put(-5,8){$\alpha_1$}
\put(15,8){$\alpha_2$}
\put(65,8){}
\put(127,8){}
\put(240,10){$\alpha_{\frac{n}{2}-1}$}
\put(240,-13){$\alpha_{\frac{n}{2}}$}
\end{picture}
\end{align*}
\ms

\noindent We also use the projective space $\PP(M^\vee)$ with Grothendieck's convention, i.e., $\PP(M^\vee)(S)$, $S\in \Ralg$, corresponds to the complemented submodules  $D$ of $M_S$ which are locally free of rank $1$. The quadric $\uQ$ defined by $q=0$ consists of those $D$ in $\PP(M^\vee)(S)$ with $q(D) = 0$. For $D \in \uQ(R)$ we let $\uP$ be the $R$-subgroup scheme of  $\uG$ which stabilizes $D$.
Finally, we remind the reader of our abbreviation $S/\m = S/\m S$ for $S\in \Ralg$ and $\m \in \Specmax(R)$.

\begin{prop}\label{prop_qquadric} We use the notation of {\rm \ref{prop_qquadric-not}}.  Then the following hold. \sm

\begin{enumerate}[label={\rm (\alph*)}]
 \item \label{prop_qquadric1}
\begin{enumerate}[label={\rm (\roman*)}]
  \item \label{prop_qquadric1-i}
 $\uP$ is a parabolic $R$--subgroup of $\uG$.

 \item  \label{prop_qquadric1-ii} The  orbit map $\uG \to \PP(M^\vee)$, $g \to g.[D]$, induces  an isomorphism $\uG/\uP \simlgr \uQ$ of $R$--schemes.

\item \label{prop_qquadric2}
$\uQ$ is $\uG$-isomorphic to $\mathrm{Par}(\uG)_{t_1}$
where the type $t_1$ is constant and of value $\Dyn(G)(R) \setminus \{\al_1\}_R$
with the enumeration of the respective Dynkin diagrams displayed above.
\end{enumerate}
\sm

\item \label{prop_qquadric3}  Let $R$ be semilocal. Then
\begin{enumerate}[label={\rm (\roman*)}]
 \item   \label{prop_qquadric3-i}  $\uG(R)$  acts transitively on $\uQ(R)$, and

 \item \label{prop_qquadric3-ii} if $S$ is a finite
$R$-algebra such that $\uQ(S) \not = \emptyset$, then
\begin{equation} \label{prop_qquadric4-1}
\uQ(S) \longto \prod\limits_{\m \in \Specmax(R)} \uQ(S/\m)
\end{equation}
is onto.
\end{enumerate}
\end{enumerate}
\end{prop}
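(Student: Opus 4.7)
The plan is to prove (a) first — which identifies $\uQ$ as a scheme of parabolic subgroups of constant type — and then deduce (b) as an immediate application of Demazure's Conjugacy Theorem~\ref{thm_conj_demazure}.

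The heart of (a) is a fiberwise identification. Over an algebraically closed field $k$, $\uSO(q_k)$ is split reductive and a direct computation in a hyperbolic basis shows that the stabilizer of an isotropic line $[D] \in \uQ(k)$ is the maximal parabolic whose conjugacy class is obtained by deleting the Bourbaki node $\alpha_1$ (the endpoint adjacent to the arrow in type $\rmB$, one of the endpoints opposite the fork in type $\rmD$, and the unique node in type $\rmA_1$). In all cases this corresponds to the constant type $t_1$. To promote this to the relative setting, I would observe that the stabilizer functor $\uP = \mathrm{Stab}_{\uG}([D])$ is representable by a closed, finitely presented $R$-subgroup scheme of $\uG$ by \cite[VI$_B$]{SGA3}. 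By \cite[XXVI.3]{SGA3}, $\Par(\uG)_{t_1}$ is a smooth projective $R$-scheme. Combining the fiberwise classification with the smoothness of $\uG$ and $\Par(\uG)_{t_1}$ gives that $\uP$ is smooth, and fiberwise parabolic of type $t_1$; the characterization of parabolic subgroups in \cite[XXVI.1]{SGA3} then promotes this to a parabolic $R$-subgroup scheme of type $t_1$. This proves (a)(i) and produces a distinguished $R$-point of $\Par(\uG)_{t_1}$.

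For (a)(ii), the orbit morphism $\uG \to \uQ$, $g \mapsto g \cdot [D]$, factors through the fppf quotient $\uG/\uP$, which is representable and smooth projective over $R$ because $\uP$ is parabolic. The induced $\uG$-equivariant morphism $\bar\phi : \uG/\uP \to \uQ$ is a morphism of smooth projective $R$-schemes which is an isomorphism on each geometric fiber by the field case. Hence $\bar\phi$ is an isomorphism by a standard fiberwise criterion. Statement (a)(iii) then follows by composing $\bar\phi^{-1}$ with the $\uG$-equivariant isomorphism $\uG/\uP \simlgr \Par(\uG)_{t_1}$, $g\uP \mapsto g\uP g^{-1}$, which is well defined since parabolics are self-normalizing and surjective because $\Par(\uG)_{t_1}$ is the single $\uG$-orbit of $\uP$ (using that the action of $\uG$ on $\Par(\uG)_{t_1}$ is transitive at the level of sheaves).

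With (a) in hand, part (b) is an immediate translation of Theorem~\ref{thm_conj_demazure}: (b)(i) follows from part \ref{thm_conj_demazure-a} of that theorem applied with $\uX = \uQ \cong \Par(\uG)_{t_1}$, and (b)(ii) from part \ref{thm_conj_demazure-b}. The main obstacle is the fiberwise identification of the stabilizer type, particularly the low-rank degenerate cases ($n = 3,4$) where the Dynkin diagram collapses, and the characteristic $2$ case, where one must use that $(M,q)$ is a quadratic space (i.e.\ $q$ is nonsingular) so that both $\uSO(q)$ and the quadric $\uQ$ remain smooth over $R$ by \cite[C.2.10]{Co1}; without this hypothesis one loses smoothness in characteristic $2$ for even-rank forms and the entire argument breaks down.
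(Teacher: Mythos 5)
Your overall strategy (identify the geometric fibers of the stabilizer as parabolics of type $t_1$, show the orbit map $\uG/\uP\to\uQ$ is a fiberwise isomorphism, invoke \cite[XXVI.3.6]{SGA3} to get $\uQ\cong\mathrm{Par}(\uG)_{t_1}$, and then read off part (b) from Theorem~\ref{thm_conj_demazure}) is the same as the paper's, and parts (a)(ii), (a)(iii) and (b) are handled correctly modulo (a)(i). But there is a genuine gap at the key step of (a)(i): you assert that ``combining the fiberwise classification with the smoothness of $\uG$ and $\mathrm{Par}(\uG)_{t_1}$ gives that $\uP$ is smooth.'' Over an arbitrary base ring $R$ this is not a valid deduction. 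A closed, finitely presented subgroup scheme of a smooth group scheme with smooth (even parabolic) geometric fibers need not be smooth unless it is \emph{flat} over $R$, and flatness of a stabilizer is exactly the delicate point here; moreover the characterization of parabolic subgroup schemes in \cite[XXVI.1]{SGA3} that you invoke to ``promote'' the fiberwise statement already presupposes smoothness of the subgroup scheme, so using it at this stage is circular. The fiberwise isomorphism criterion you use for $\uG/\uP\to\uQ$ likewise needs flatness of the source over $R$, which you only get once $\uP$ is known to be parabolic.

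The paper's proof spends essentially all of its effort on precisely this point, and does so by a reduction you omit: since the statements are local for the flat topology, one may assume $(M,q)$ is the standard hyperbolic (resp.\ standard split) form \cite[Lemma C.2.1]{Co1}, and one first treats the special isotropic line $D=Rp$ with $p=(1,0,\dots,0)$; this particular pair is defined over $\ZZ$, and over the Dedekind base $\ZZ$ the stabilizer $\uP_p$, being affine, finitely presented, with smooth connected fibers of constant dimension, is smooth by \cite[Lemma B.1]{AGi} --- this is where flatness is actually obtained. Only then do the fiberwise-isomorphism criterion \cite[IV$_4$, 17.9.5]{EGA} and \cite[XXVI.3.6]{SGA3} apply, and the case of a general $D$ is reduced to the special one by showing $D$ is Zariski-locally $\uG$-conjugate to $Rp$ via Corollary~\ref{cor_conj_demazure} (plus a faithfully flat descent argument for the identification of the type $t_1$). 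To repair your argument you must either supply such a reduction to a universal (e.g.\ $\ZZ$-) base where flatness of the stabilizer can be proved, or give an independent proof that $\uP$ is flat over an arbitrary $R$; as written, the smoothness of $\uP$ is asserted rather than proved, and with it (a)(i) and everything downstream.
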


\begin{proof}
We prove  only  the  even rank case, the odd case being analogous.
\sm

\noindent \ref{prop_qquadric1}
We observe that the claims hold over a field \cite[Th. 3.9.(i)]{Co3} and also that they are local for the flat topology. According to \cite[Lemma C.2.1]{Co1} or \cite[IV, (3.2.1)]{K}, we may  assume that $(M,q)$ is the standard hyperbolic
quadratic $R$-form over $R^{2n}$ ($n \geq 2$), defined by $q(x_1,\dots, x_n, y_1, \dots, y_n)= x_1 y_1 +  \dots + x_n y_n$.

We first consider the case $D=Rp$ where $p=(1,0, \dots, 0, 0,  \dots ,0 )$, and denote by $\uP_p$ the corresponding subgroup scheme. This permits us to  assume  $R=\ZZ$. Then $\uP_p$ is an affine finitely presented $\ZZ$-group scheme whose algebraically closed fibers are smooth connected and of dimension $2n-2$. By \cite[Lemma B.1]{AGi}, $\uP_p$ is smooth. Since the geometric fibers of
$\uP_p$ are parabolic subgroups, $\uP_p$ is parabolic too.
The induced  orbit map $f:  \uG/\uP_p \to \uQ$ is a monomorphism.
The field case ensures that this is a fiberwise  isomorphism. Since $\uG/\uP_p$ is flat and of finite presentation, the fiberwise isomorphism criterion \cite[IV$_4$, 17.9.5]{EGA} enables us to conclude that $f$ is an isomorphism. It follows that $\uQ$ is homogeneous under $\uG$. Thus \ref{prop_qquadric1-i} and \ref{prop_qquadric1-ii} hold for the special $p$.

Let us now deal with the general case. Since $\uG$-homogeneity is  a local property with  respect to the flat topology, $\uQ$ is homogeneous under $\uG$.
The case of a general $D$ then follows from the observation that $D$ is locally
$\uG$-conjugated to $Rp$ in the Zariski topology by applying  Corollary \ref{cor_conj_demazure} to $\uQ$. This proves \ref{prop_qquadric1-i} and \ref{prop_qquadric1-ii} in general.
 \sm

\noindent  \ref{prop_qquadric2} We first deal with the special $p$ above.
We  have an isomorphism $\uG/\uP \cong \uQ$. On the other hand, according to \cite[XXVI.3.6]{SGA3}, we have a $\uG$-isomorphism $\uG/\uP \simlgr \mathrm{Par}(\uG)_{t(\uP)}$,  hence an
 $\uG$-isomorphism $\uQ \simlgr \mathrm{Par}(\uG)_{t(\uP)}$.
 This isomorphism applies a point $x \in \uQ(R)$ to the stabilizer $\uG_x$, so is a canonical isomorphism.  Here $t(\uP) \in \mathrm{Of}( \mathrm{Dyn}(\uG))(R)$
 is the type of $\uP$.  Checking that it is $t_1$, reduces to the field case which is \cite[Lemma 3.12]{Co3}.

For the general case, let $S$ be a flat cover of $R$ such that $\uQ(S) \not
=\emptyset$.
We have an isomorphism $\uQ_S \simlgr \mathrm{Par}(G)_{t_1,S}$
which is canonical and $\uG_S$-equivariant.
By faithfully flat descent, it descends to a $\uG$-equivariant isomorphism
$\uQ \simlgr  \mathrm{Par}(\uG)_{t_1}$.
\sm

\noindent  \ref{prop_qquadric3} follows from Theorem~\ref{thm_conj_demazure}
applied to $\uQ \cong \mathrm{Par}(\uG)_{t_1}$.
\end{proof}

\begin{cor} \label{cor_qquadric}
We assume that $R$ is a semilocal ring. Let $(M,q)$ be a quadratic space
of constant  rank $\geq 3$, let $S$ be a finite $R$-algebra such that $q_S$ is isotropic, and let $(v_\m)_{\m \in \Specmax(R)}$ be a family of isotropic elements $v_\m \in M \otimes_R S/\m$. Then there exists an isotropic $v \in M \otimes_R S$ that lifts the $v_\m$'s.
\end{cor}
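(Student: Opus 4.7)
The plan is to invoke Proposition~\ref{prop_qquadric}\ref{prop_qquadric3}\ref{prop_qquadric3-ii} to lift the \emph{lines} spanned by the given isotropic vectors, then to produce an isotropic unimodular generator of the lifted line, and finally to rescale that generator by a unit of $S$ so that its reduction modulo $\m S$ is exactly $v_\m$ (and not merely a scalar multiple) for every $\m$.

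For each $\m \in \Specmax(R)$, the vector $v_\m$ is unimodular with $q(v_\m)=0$, so the submodule $L_\m := (S/\m S)\, v_\m \subset M \otimes_R S/\m S$ is a complemented free line lying in $\uQ(S/\m)$. Since $q_S$ is isotropic, $\uQ(S)\neq\emptyset$, and Proposition~\ref{prop_qquadric}\ref{prop_qquadric3}\ref{prop_qquadric3-ii} yields some $D\in \uQ(S)$ with $D\otimes_S S/\m S = L_\m$ for every $\m\in\Specmax(R)$. Because $R$ is semilocal and $S$ is finite over $R$, $S$ is itself semilocal, so the projective rank-one $S$-module $D$ is free. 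Hence $D=Sv$ for some $v\in M_S$; since $D$ is complemented of rank one, $v$ is unimodular in $M_S$, and $q(v)=0$ by definition of $\uQ$.

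It remains to correct $v$ by a unit so that $\bar v = v_\m$ rather than merely a multiple of $v_\m$ in $M\otimes_R S/\m S$. For each $\m$ the reduction $\bar v$ generates $L_\m$, so there is a unique $u_\m\in (S/\m S)^\times$ with $\bar v = u_\m v_\m$. Using $R/\Jac(R)\cong \prod_\m R/\m$ one has $\prod_\m S/\m S \cong S/\Jac(R) S$, so the canonical map $S\to \prod_\m S/\m S$ is surjective (as already recalled in the proof of \ref{thm_conj_demazure}\ref{thm_conj_demazure-b}). Hence one can choose $u\in S$ with $u\equiv u_\m^{-1}\pmod{\m S}$ for every $\m$. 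Every maximal ideal of $S$ contracts to a maximal ideal of $R$ by integrality, so $\Jac(R) S\subset \Jac(S)$; consequently $u$, being a unit modulo $\Jac(R) S$, is itself a unit of $S$. Then $v':=uv$ is unimodular, satisfies $q(v')=u^2 q(v)=0$, and reduces to $u_\m^{-1}\cdot u_\m v_\m = v_\m$ modulo $\m S$ for every maximal $\m$ of $R$.

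The conceptual content of the corollary sits entirely in Proposition~\ref{prop_qquadric}\ref{prop_qquadric3}\ref{prop_qquadric3-ii}; what remains is organizational. The only subtle point is that the proposition lifts lines rather than vectors, so the last paragraph, where the semilocal Chinese Remainder Theorem is used to rescale a generator of $D$, is where the argument is actually completed. The rank hypothesis $\rank M\ge 3$ enters only indirectly, through the applicability of Proposition~\ref{prop_qquadric}.
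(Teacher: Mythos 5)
Your proof is correct and follows essentially the same route as the paper: lift the lines $(S/\m)v_\m$ via Proposition~\ref{prop_qquadric}\ref{prop_qquadric3}\ref{prop_qquadric3-ii}, use semilocality of $S$ to write the lifted line as $Sv$ with $v$ unimodular and isotropic, and then rescale $v$ by a unit using the surjectivity of $S^\times \to \prod_\m (S/\m)^\times$. Your justification of that surjectivity (lifting the $u_\m^{-1}$ and using $\Jac(R)S \subset \Jac(S)$) is just a spelled-out version of the fact the paper invokes directly.
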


\begin{proof}
Let $\uQ \subset \PP(M^\vee)$ be the projective quadric
associated with $q$ in Proposition~\ref{prop_qquadric}. Our assumption is that $\uQ(S) \not = \emptyset$.

Each $S/\m$--module $(S/\m). v_\m$ is a complemented submodule
of $M \otimes_R S/\m$ which is free of rank one, so defines a point
$x_\m \in \uQ(S/\m)$. Proposition \ref{prop_qquadric}\ref{prop_qquadric3}\ref{prop_qquadric3-ii}
provides an element $x \in \uQ(S)$ which lifts the $x_\m$'s.
Since $S$ is semilocal, $x$ is represented by an $S$-module $D$ which is  a complemented submodule of $M\otimes_R S$ of rank one  and satisfies $q(D)=0$. We write $D=S v$ where $v$ is an $S$--unimodular element of $M \otimes_R S$ satisfying $q(v)=0$. Since
$S^\times \to \prod_\m (S/\m)^\times$ is onto, 
we can modify $v$ by an unit of $S$ to ensure that
$v$ lifts the $v_\m$'s. \end{proof}

\section{Trace for torsors}  \label{app:torEN}

\subsection{Weil restriction (\cite[7.6]{BLR}, \cite[A.5]{CGP},  \cite[I, \S1, 6.6]{DG})} \label{were} Let $S\in \Ralg$. Given an $S$--functor $Y'$, the {\em Weil restriction\/} of $Y'$ is the $R$--functor $\sfR_{S/R}(Y')$ defined by
\[ \sfR_{S/R}(Y') (A) = Y'(A \ot_R S), \quad (A\in \Ralg). \]
It is uniquely determined by the following universal property: for  every $R$--functor $X$ there exists a bijection
\begin{equation}   \label{were1}
\xi = \xi_{X,Y'} \co \Mor_{\Salg} (X_S, Y') \simlgr \Mor_{\Ralg}(X, \sfR_{S/R}(Y'))
\end{equation}
where $X_S$ is the $S$--functor obtained from $X$ by base change, thus satisfying $X_S(B) = X({_R B})$ for $B\in \Salg$.  Here and sometimes in the following we write ${_R B}$ to denote the $R$--algebra obtained from the $S$--algebra $B$ by restriction of scalars. The bijection $\xi$ is functorial in $X$ and $Y'$. It maps $g\in \Mor_{\Salg} (X_S, Y')$ to the composition
\[ X(A) \xrightarrow{X(\inc_1)} X\big({_R(A \ot_R S) }\big) \xrightarrow{g(A \ot_R S)} Y'(A\ot_R S)
\]
where $A\in \Ralg$ and $\inc_1$ is the $R$--algebra homomorphism
\[ \inc_1 = \inc_{1,A} \co A \longto {_R(A \ot_R S)} , \quad a \mapsto a \ot 1_S.
\]

We consider two special cases of \eqref{were1}. First, for $Y' = X_S$ and $g= \Id_{X_S}$ we get the morphism
\[
 \sfj =  \sfj_X = \xi(\Id_{X_S}) \co X \longto \sfR_{S/R}(X_S) \]
of $R$--functors, determined by $\sfj_X(A) = X(\inc_{1,A})$, $A\in \Ralg$. Second, putting $X = \sfR_{S/R}(Y')$ in \eqref{were1}, there exists a unique morphism
\[ \sfq = \sfq_{Y'} \co \sfR_{S/R}(Y')_S \longto Y' \]
of $S$--functors satisfying $\xi(\sfq_{Y'}) = \Id_{\sfR_{S/R}(Y')}$. For $B\in \Salg$ we have $\sfR_{S/R}(Y')_S(B) = \sfR_{S/R}(Y')({_R B}) = Y'({_RB} \ot_R S)$  and so
\[ \sfq_{Y'}(B)  \co Y'({_R B}\ot_R S) \longto Y'(B).
\] In fact,
$\sfq_{Y'}(B) =  Y'(m_B)$ where $m_B$ is the $S$--algebra homomorphism
\[ m_B \co ({_R B}) \ot_R S  \longto B, \quad b \ot s \mapsto bs\]
(\cite[A.5.7]{CGP}). For $Y' = X_S$ we now have constructed morphisms
\[
  X_S \xrightarrow{\, \sfj_{X,S}\, } \sfR_{S/R}(X_S)_S \xrightarrow{\, \sfq_{X_S}\, } X_S
\]
 between $S$--functors. Untangling the constructions above, we find
\begin{equation}
  \label{were2} \sfq_{X_S} \circ \sfj_{X,S} = \Id_{X_S}
\end{equation}
 because $B \xrightarrow{\inc_{1,B}} {_R B} \ot_R S \xrightarrow{m_B} B$ equals $\Id_B$.

\subsection{Cohomology and restriction} \label{cores} Let $G$ be a flat $R$--group sheaf. We denote by
\[ H^1(R, G) = H^1\fppf(R, G) \]
the pointed set of isomorphism classes of $G$--torsors over $\Spec(R)$ in the flat topology. Let $S\in \Ralg$. The base change $X_S= X \times_{\Spec(R)} \Spec(S)$ of a $G$--torsor $X$ is a $G_S$--torsor, giving rise to the {\em restriction map\/}
\[ \res = \res_{S/R, \, G} \co H^1(R, G) \to H^1(S, G_S), \quad [X] \mapsto [X_S].
\]
A homomorphism $f \co G \to H$ of flat $R$--group sheaves induces a map in cohomology
\[ f_* \co H^1(R, G) \to H^1(R, H) , \quad [X] \mapsto [X \we^G H] \]
where $X\we^G H = (X \times_{\Spec(R)} H) / G$ is the contracted product with respect to the $G$--action on $H$ via $f$.
Contracted products are special cases of fppf quotients and as such allow base change \cite[(4.30)]{moonen}. Hence
$(X \we^G H)_S $ and $X_S \we^{G_S} H_S$ are isomorphic $H_S$--torsors, giving rise to a commutative diagram
\begin{equation}  \label{cores1} \vcenter{
 \xymatrix@C=40pt
 {  H^1(R, G) \ar[r]^{f_*} \ar[d]_\res & H^1(R, H) \ar[d]^\res \\
     H^1(S, G_S) \ar[r]^{f_{S,*}} & H^1(S, H_S)   }
}\end{equation}
of pointed sets. It is known that $H =\sfR_{S/R}(G_S)$ is again a flat $R$--group sheaf if $G$ is so and that the maps
\[ \sfj \co G \to \sfR_{S/R}(G_S) \quad \text{and} \quad
     \sfq\co \sfR_{S/R}(G_S)_S \to G_S
 \]
of \ref{were} are homomorphisms of $R$--group sheaves.
Hence, passing to cohomology,  the maps in the diagram are well-defined:
\begin{equation}  \label{cores2} \vcenter{
 \xymatrix@C=40pt
 {  H^1(R, G) \ar[r]^{\sfj_*} \ar[d]_{\res_{\,G}}  &
         H^1(R, \sfR_{S/R}(G_S) ) \ar[d]^{\res_{\, \sfR(G_S)}}
     \ar@{-->}[dl]_{\rmi}             \\
   H^1(S, G_S) & H^1(S, \sfR_{S/R}(G_S)_S) \ar[l]^{\sfq_*}   }
}\end{equation}
We claim that \eqref{cores2} is a commutative diagram. Indeed, since $\sfq \circ \sfj_S = \Id_{G_S}$ by \eqref{were2}, this follows from commutativity of \eqref{cores1}:
\begin{equation}  \label{cores3}
   \res_{\, G} = \sfq_* \circ (\sfj_S)_* \circ \res_{S/R, G} = \sfq_* \circ
      \res_{\, \sfR_{S/R}(G_S)} \circ \sfj_* = \rmi \circ \sfj_*.
\end{equation}
The map $\rmi = \sfq_* \circ \res_{\, \sfR_{S/R}(G_S)}$ is injective by  \cite[XXIV.8.2]{SGA3}.
\sm

In particular, {\em assume that $G$ is an abelian affine $R$--group scheme}. Then so are $G_S$, $\sfR_{S/R}(G_S)$ and $\sfR_{S/R}(G_S)_S$. 
 Moreover, the cohomology sets
and maps used in \eqref{cores2} are abelian groups and group homomorphisms respectively. Since $\rmi$ is injective, we get from \eqref{cores3} that
\begin{equation}\label{scores4}
   \Ker( \res_{\, G}) =  \Ker( \rmi \circ \sfj_*) = \Ker (\sfj_*).
\end{equation}

\subsection{Deligne trace homomorphism}\label{del-tr} Let $S\in \Ralg$ be locally free of finite rank $d\in \NN_+$, i.e., the $R$--module $S$ is projective of constant rank $d$, and let $G$ be an abelian affine $R$--group scheme. We then have Deligne's trace homomorphism
\begin{equation} \label{del-tr1} \sftr \co \sfR_{S/R}(G) \to G, \quad
  \text{satisfying $\sftr \circ \sfj = \times d$},
 \end{equation}
where   $\times d \co G \to G$ is the group homomorphism given on $T$--points, $T\in \Ralg$, by $g \mapsto g^d$ \cite[XVII, 6.3.13--6.3.15]{SGA4}. It induces an endomorphism
\[ (\times d)_* \co H^1(R, G) \longto H^1(R, G)
\]
given by the analogous formula. Since $(\times d)_* = \sftr_* \circ \sfj_*$ by \eqref{del-tr1}, we obtain from \eqref{scores4} that
\begin{equation}\label{del-tr-2}
    \Ker(\res_{S/R, G}) = \Ker(\sfj_* ) \subset \Ker\big((\times d)_*\big).
 \end{equation}
In particular, this implies the following.

\begin{lem} \label{lem_ttrace} In the setting of\/ {\rm \ref{del-tr}} assume that
$\times d: G \to G$ is an isomorphism. Then the restriction homomorphism
  $\res_{S/R, \, G} \co H^1\fppf(R,G) \to H^1\fppf(S,G)$ is injective.
\end{lem}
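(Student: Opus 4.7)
The plan is to read off the lemma directly from the containment~\eqref{del-tr-2} already established in~\ref{del-tr}. That containment says
\[
\Ker(\res_{S/R, G}) \;=\; \Ker(\sfj_*) \;\subset\; \Ker\bigl((\times d)_*\bigr),
\]
so it suffices to argue that $(\times d)_* \co H^1\fppf(R,G) \to H^1\fppf(R,G)$ is injective (in fact an automorphism) under the hypothesis that $\times d \co G \to G$ is an isomorphism of $R$--group schemes.

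First I would note that $H^1\fppf(R,G)$ is a group, not merely a pointed set, since $G$ is an abelian affine $R$--group scheme, and that $(\times d)_*$ is a group homomorphism. The key observation is functoriality: if $\times d \co G \to G$ is an isomorphism of abelian $R$--group schemes, then its inverse $\varphi \co G \to G$ is again a group scheme homomorphism, and $\varphi \circ (\times d) = \id_G = (\times d) \circ \varphi$. Passing to $H^1$ and using the functoriality of $f \mapsto f_*$ (see the description via contracted products in~\ref{cores}), we get $\varphi_* \circ (\times d)_* = \id = (\times d)_* \circ \varphi_*$ on $H^1\fppf(R,G)$. Hence $(\times d)_*$ is bijective, so in particular its kernel is trivial.

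Combining with~\eqref{del-tr-2} gives $\Ker(\res_{S/R,G}) \subset \Ker((\times d)_*) = 0$, and since $\res_{S/R,G}$ is a homomorphism of abelian groups, triviality of the kernel is equivalent to injectivity. This completes the proof. There is no real obstacle here: all the work was done in building up the Deligne trace diagram and deriving~\eqref{del-tr-2}; the lemma is simply the observation that if multiplication by $d$ is invertible on $G$ as an endomorphism of group schemes, it is invertible on $H^1$ as well, and so the kernel inclusion forces $\res_{S/R,G}$ to be injective.
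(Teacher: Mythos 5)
Your argument is correct and is precisely the paper's: the lemma is stated as an immediate consequence of the containment \eqref{del-tr-2}, with the (implicit) observation that $\times d$ being an isomorphism makes $(\times d)_*$ an automorphism of the abelian group $H^1\fppf(R,G)$ by functoriality, forcing $\Ker(\res_{S/R,G})$ to be trivial and hence $\res_{S/R,G}$ injective. Nothing is missing; you have simply spelled out the step the paper leaves to the reader.
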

\sm

\textbf{Example.} Let $G$ be the constant $R$--group scheme $\ZZ/2\ZZ$, which is the automorphism group scheme of a quadratic \'etale $R$--algebra $A$, and let $d$ be odd. Then Lemma~\ref{lem_ttrace} applies and in particular shows that $A$ is split, i.e., $A=R\times R$, if and only if $A_S$ is split.

\ms

{\em Acknowledgments.} We thank M.~Ojanguren and the referee for helpful comments on an earlier version of this paper.


\begin{thebibliography}{SGA34}

\bibitem[AG]{AGi} S.~Alsaody, P.~Gille,  {\em Isotopes of octonion
    algebras, ${\rm G}_2$-Torsors and triality}, Adv.~Math. \textbf{343} (2019), 864--909.

\bibitem[Ba]{Ba} R.~Baeza, {\it  Quadratic Forms over Semilocal Rings},
    Lecture Notes in Mathematics {\bf 655} (1978).

\bibitem[BFP]{BFP} E.~Bayer-Fluckiger, U.~A.~First, and R.~Parimala, {\em  On the Grothendieck-Serre Conjecture for Classical Groups}, \arxiv{1911.07666}

\bibitem[BLR]{BLR} S.~Bosch, W.~L\"utkebohmert, M.~Raynaud, {\em N\'eron models}, Ergebnisse der Mathematik und ihrer Grenzgebiete, Band \textbf{21}, Springer-Verlag 1990.

\bibitem[Co1]{Co1} B.~Conrad, {\em Reductive group schemes\/}, in {\em Autour
    des     sch\'emas en groupes, vol. I}, Panoramas et Synth\`eses \textbf{42-43},
    Soc. Math. France  2014.

\bibitem[Co2]{Co3} \bysame,  {\it Standard parabolics subgroups. Theory and
    Examples}, http://math.stanford.edu/~conrad/249BW16Page/handouts/stdpar.pdf


\bibitem[CGP]{CGP} B.~Conrad, O.~Gabber, and G.~Prasad, {\em Pseudo-reductive
    groups}, second edition. New Mathematical Monographs \textbf{26}, Cambridge University Press, Cambridge 2015.

\bibitem[DG]{DG} M.~Demazure and P.~Gabriel, {\em Groupes alg\'ebriques},
    North-Holland (1970).

\bibitem[EGA]{EGA} A.~Grothendieck (avec la collaboration de J. Dieudonn\'e),
    {\it El\'ements de G\'eom\'etrie Alg\'ebrique}, Publications
    math\'ematiques de l'I.H.\'E.S. no.~4, 8, 11, 17, 20, 24, 28, 32, 1960--1967.

\bibitem[EKM]{EKM} R.~Elman, N.~Karpenko, and A.~Merkurjev, {\em The
    algebraic and geometric theory of quadratic forms},
    Amer.~Math.~Soc.~Colloq.~Publ. \textbf{56},  Amer. Math. Soc., Providence,
    RI, 2008.

\bibitem[Fo]{Ford} T.~J. Ford, {\it Separable Algebras}, Graduate
    Studies in Mathematics {\bf 183}, Amer.~Math.~Soc, Provident, RI, (2017).

\bibitem[GM]{moonen} G.~van der Geer and B.~Moonen, preliminary version of a book on {\em Abelian Varieties}, https://www.math.ru.nl/~bmoonen/research.html\#lecturenotes

\bibitem[Ho1]{Ho1} D.~W.~Hoffmann, {\em Diagonal forms of degree $p$ in characteristic $p$}, Algebraic and arithmetic theory of quadratic forms, 135-–183, Contemp. Math., \textbf{344}, Amer.~Math.~Soc., Providence, RI, 2004.

\bibitem[Ho2]{Ho2} \bysame, {\em Similarity of quadratic and symmetric bilinear forms in characteristic 2}, Indagationes Mathematicae (2020),
https://doi.org/10.1016/j.indag.2020.08.008.

\bibitem[Kne]{Knebusch} M.~Knebusch, {\em Isometrien \"uber semilokalen Ringen},  Math.~Z.~\textbf{108} (1969), 255--268.

\bibitem[Knu]{K} M.-A.~Knus, {\it Quadratic and Hermitian Forms over
    Rings}, Grundlehren der mathematischen Wissenschaften {\bf 294}
    (1991), Springer.

\bibitem[Lag]{Lag}  A.~Laghribi, {\em The norm theorem for totally singular quadratic forms}, Rocky Mountain J. Math. \textbf{36} (2006), 575--592.

\bibitem[Lam]{Lam-qf} T.~Y.~Lam, {\em An Introduction to Quadratic Forms
    over Fields}, Graduate Studies in Mathematics \textbf{67}, Amer.~Math.~ Soc., Providence, RI, 2005.

\bibitem[Lo]{Loos} O.~Loos, {\em Generically algebraic Jordan algebras over commutative rings\/}, J.~Algebra \textbf{297} (2006), 474--529.

\bibitem[PR]{panin-rehmann} I.~Panin and U.~Rehman, {\em A variant of a theorem by Springer}, St.~Petersburg Math.~J. \textbf{19} (2008), 953--959.

\bibitem[PP]{panin-pimenov} I.~Panin and K.~Pimenov, {\em Rationally isotropic quadratic spaces are locally isotropic: II}, Doc.~Math.~2010, Extra vol.: Andrei A. Suslin sixtieth birthday, 515-–523.


\bibitem[Pe]{P-Fields} H.~P.~Petersson, {\em A survey on Albert algebras}, Transform.~Groups \textbf{24} (2019), 219-–278.


\bibitem[Sa]{Sah} C.~H.~Sah, {\em Symmetric bilinear forms and quadratic forms},
    J. Algebra {\bf 20} (1972), 144-160.

\bibitem[Sch]{Sc} W.~Scharlau, {\it Quadratic and Hermitian Forms}, Grundlehren der Mathematischen Wissenschaften \textbf{270}, Springer-Verlag Berlin 1985.

\bibitem[Scu]{Scully} S.~Scully, {\em The Artin-Springer Theorem for quadratic forms over semi-local rings with finite residue fields}, Proc.~Amer.~Math.~Soc. \textbf{146} (2018), 1--13.

\bibitem[SGA3]{SGA3} {\it S\'eminaire de G\'eom\'etrie alg\'ebrique de
    l'I.H.E.S.,
    1963-1964, sch\'emas en groupes, dirig\'e par M. Demazure et A.
    Grothendieck},  Lecture Notes in Math. 151-153. Springer (1970).

\bibitem[SGA4$_3$]{SGA4} {\em Th\'eorie des topos et cohomologie \'etale des sch\'emas}, Tome 3 (French),  S\'eminaire de G\'eom\'etrie Alg\'ebrique du Bois-Marie 1963--1964 (SGA 4). Dirig\'e par M.~Artin, A.~Grothendieck et J. L. Verdier. Avec la collaboration de P.~Deligne et B.~Saint-Donat. Lecture Notes in Mathematics  \textbf{305}, Springer-Verlag, Berlin-New York, 1973.

\bibitem[Sp]{springer-qf} T.~A.~Springer, {\em Sur les formes quadratiques d'indice z\'ero}, C.~R.~Acad.~Sci.~Paris \textbf{234} (1952), 1517--1519.


\bibitem[St]{Stacks}  The Stacks Project Authors, {\em Stacks project},
\url{http://stacks.math.columbia.edu/}

\end{thebibliography}
\end{document}